\newtheorem{Theorem}{Theorem}[section]
\newtheorem{Lemma}{Lemma}[section]
\newtheorem{Proposition}{Proposition}[section]
\theoremstyle{definition}
\newtheorem{Definition}{Definition}[section]
\theoremstyle{remark}
\newtheorem{Remark}{Remark}[section]
\numberwithin{equation}{section}
\renewcommand{\u}{{\bf u}}
\newcommand{\R}{{\mathbb R}}
\newcommand{\Dv}{{\rm div}}
\newcommand{\x}{{\bf x}}
\newcommand{\m}{{\bf m}}
\def\f{\frac}
\renewcommand{\O}{\Omega}
\def\D{\Delta }
\def\hf1{^\f{1}{1-\xi^2}}
\def\be{\begin{equation}}
\def\en{\end{equation}}
\def\bs{\begin{split}}
\def\es{\end{split}}
\renewcommand{\v}{{\bf v}}
\newcommand{\supp}{{\rm supp}}
\newcommand{\eps}{{\varepsilon}}
\author{Irene M. Gamba}
\address{Department of Mathematics, The University of Texas at Austin,
                           Austin, Texas 78712.}
\email{gamba@math.utexas.edu}
\author{Cheng Yu}
\address{Department of Mathematics, The University of Texas at Austin,
                           Austin, Texas 78712.}
\email{yucheng@math.utexas.edu}
\title%[Global weak solutions to N-S-V equations]
\keywords{Navier-Stokes-Vlasov-Boltzmann equations, compressible flow, weak solutions}
\subjclass[2000]{35Q35, 76D05, 82C40, 35H10}
\date{\today}
\begin{document}

\begin{abstract}
This work concerns the global existence of the weak solutions to a system of partial differential equations modeling the evolution of particles in the fluid. That system is given by a coupling between the standard isentropic compressible Navier-Stokes equations for the macroscopic description of a gas fluid flow, and a Vlasov-Boltzmann type equation governing the evolution of  spray droplets modeled as  particles with varying radius.
    We establish the existence of global weak solutions with finite energy, whose density of gas satisfies the renormalized mass equation.
   The proof, is partially motivated by the work of Feireisl- Novotny-Petzeltov \cite{FNP}  on the weak solutions of the compressible Navier-Stokes equations      coupled to the kinetic problem for the spray droplets extending the techniques of  Legger and Vasseur \cite{LV} developed for the incompressible fluid-kinetic  system.

\end{abstract}

\maketitle

\section{Introduction}
A large variety models  describing sprays dynamics, introduced by Williams \cite{W}, are obtained by coupling a  of fluid mechanics equation and a kinetic one describing the spray as perfect bubbles. In such a system models, the gas surrounding the spray  is described by classical fluid macroscopic quantities: its density $\rho(t,x)\geq 0$ and velocity $\u(t,x)$. Depending on the physical properties of such gas fluid, the evolution of those quantities is ruled by the Navier-Stokes or Euler Equations compressible flows.  Fluid viscosity because an important physical quantity and it is model in the classical compressible Navier Stokes framework.

The spray droplet evolution is assumed to be given by independent distributed continuum randon variables described by a distribution function $f=f(t,x,\v,r)\geq 0$ given the probability of finding a droplet  with center at position $x$, with radious $r$, time $t$, moving with  velocity $\v$. Depending on physical properties of the droplets, the evolution of $f$ is governed by a kinetic equation given by a Vlasov-linear Boltzmann  model, were the non-local Boltzmann operator models collisions and breakup.

%with  Brownian motion of the gas exerting on the droplet dynamics. In this paper, we neglect the Brownian motion of the droplets.

In such a system models, the coupling comes from drag force in the fluid equation and the acceleration in the Vlasov term of kinetic equation, as the fluid a dense phase and  the droplets in a disperse phase strongly  interact on each other.

More specifically we consider an spray model given by  the following  Navier-Stokes-Vlasov-Boltzmann system of equations for droplet particles dispersed in a compressible viscous fluid
\begin{equation}\label{NS1}
\rho_{t}+\Dv{(\rho\u)}=0,
\end{equation}
\begin{equation}\label{NS2}
(\rho\u)_{t}+\Dv(\rho\u\otimes \u)+\nabla p-\mu\D\u-\lambda\nabla\Dv\u=\mathbf{F}_{r}(t,x),
\end{equation}
\begin{equation}\label{VB}
f_{t}+\xi\cdot\nabla_{x}f+\Dv_{\xi}{(F f)}=Q(f),
\end{equation}
for $(x,\xi,r, t)$ in $\O\times\R^3\times[a,b]\times[0,\infty)$,
where $\O\subset\R^3$,
$\rho$ is the density of the fluid,
$\u$ is the velocity of the fluid, $p=\rho^{\gamma}$ is the pressure for some $\gamma>1$, The viscosity coefficients  $\mu$ and $\lambda$ have a relationship
$$\mu>0,\quad\lambda+\frac{\mu}{3}\geq 0.$$
The probability density distribution function $f(x,\xi,r,t)$ of gas particles
depends on the physical position $x\in\O$, the velocity of particle $\xi\in\R^3$, the radius of a particle $r\in [a,b],$ and  the time $t\in[0,T]$,
 where  $a,b>0$ are the constants.
 %The number of particles enclosed at $t\geq 0$ and location $x\in\O$ in the volume element $d\v$ is given by $f(t,x,\v)\,d\v$,
 The zero moment of the gas particles density is
\begin{equation}
\label{zero moment}
\mathfrak{n}(t,x)=\int_a^b\int_{\R^3}r f\,d\xi\,dr,
\end{equation}
and the kinetic  current (first moment) is
\begin{equation}
\label{first moment}
 j(t,x)=\int_a^b\int_{\R^3}r \xi f\,d\xi\,dr.
\end{equation}

The interaction of the fluid and particles is through the
drag force exerted by the fluid onto the particles.
In \eqref{VB}, $F$ stands for the acceleration felt by the droplets. It was
typically given by the following formula which is known as Stokes' law,
\begin{equation}
\label{force}
F(x,\xi,r,t)=\frac{9\mu}{2\rho_l}\frac{\u-\xi}{r^2},
\end{equation}
where $\rho_l$ is the constant density of the liquid, $\mu$ is the dynamic viscosity.
 In \eqref{NS2}, the right hand side term \begin{equation}
 \label{the drag force}
 \mathbf{F}_r(t,x)=-\int_a^{b}\int_{\R^3}\frac{4}{3}\rho_l r^3 f F\,d\xi\,dr.
 \end{equation}
 The operator $Q(f)$ is taking into account the complex phenomena happening at the level of the droplet particles, such as collisions and breakup.
 Assuming that the droplets keep the same velocities before and after breakup,  the  operator could be obtained
\begin{equation}
\label{operator}
Q(f)(x,\xi,r,t)=-\nu f(x,\xi,r,t)+\nu\int_{r>r^*}B(r^*,r) f(x,\xi,r^*,t)\,d r^*,
\end{equation}
where $\nu\geq 0$ is the fragmentation rate, and $B=B(r^*,r)\geq 0$ is related to the probability of ending up with droplets particles of radius $r$ out of the breakup of droplets particles of radius $r^*$.
This is a typical form of the breakage model kernel.

Without loss of generality we take $\rho_l=\frac{9\mu}{2}$ throughout the paper.
The fluid-particle system \eqref{NS1}-\eqref{operator} arises in many applications such as
sprays, aerosols, and more general  two phase flows where
one phase (disperse) can be considered as a suspension of particles onto the other
one (dense) regarded as a fluid.
System \eqref{NS1}-\eqref{operator} or its variants have been used in sedimentation
of solid grain by external forces, for fuel-droplets in combustion theory (such as in the study of engines),  chemical engineering,  bio-sprays in medicine, waste water treatment, and pollutants in the air.
We refer the readers to \cite{BD,CP,D,DR,DV,GHMZ,O,RM,W} for more physical backgrounds, applications and discussions of  the fluid-particle systems.
Leger-Vasseur have shown the existence of global weak solutions of the incompressible version of Vlasov-Boltzmann-Navier-Stokes equations.
\\
\newline
The aim of this current paper is to establish the existence of global weak solutions to
the system \eqref{NS1}-\eqref{operator}, or equivalently to
\begin{equation}\label{NSVB1}
\rho_{t}+\Dv{(\rho\u)}=0,
\end{equation}
\begin{equation}\label{NSVB2}
(\rho\u)_{t}+\Dv(\rho\u\otimes \u)+\nabla p-\mu\D\u-\lambda\nabla\Dv\u=-\int_a^b\int_{\R^3}r(\u-\xi)f\,d\xi\,dr,
\end{equation}
\begin{equation}\label{NSVB3}
f_{t}+\xi\cdot\nabla_{x}f+\Dv_{\xi}{\left(\frac{(\u-\xi)f}{r^2}\right)}=Q(f),
\end{equation}
 subject to the following initial data:
\begin{equation}\label{intial data}
\rho|_{t=0}=\rho_{0}(\x)\ge 0,\;\;
(\rho\u)|_{t=0}=\m_{0}(x),\;\;
f|_{t=0}=f_{0}(x,\xi,r),
\end{equation}
where $Q(f)$ is given by \eqref{operator}. The collision operator $Q(f)$ satisfies the following \textbf{hypotheses A}:\\
%C\uppercase\expandafter{\romannumeral20}
 %\uppercase\expandafter{\romannumeral1}. $\nu\in C^1(\overline{\R^{+}\times\R^3}),\nu\geq 0,$ and $\nu(r,\xi)=0$ if and only if $\xi=0,$ $\nu(r,\xi)\leq C r|\xi|^3$ for any $(r,\xi).$\\
 \uppercase\expandafter{\romannumeral1}.  $B\in C^1(\R^{+}\times\R^{+})$, $B\geq 0,$ and $B(r,r^{*})=0$ if $r\geq r^*.$\\
  %\uppercase\expandafter{\romannumeral3}.  There exists a constant  $C>0$ such that $\nu(r^*,\cdot)B(r,r^*)\leq C$
for all $(r,r^*)\in \R^{+}\times\R^{+}.$\\
 \uppercase\expandafter{\romannumeral2}. $\int_a^bB(r,r^*)\,dr=\int_{R(a)}^{R(b)}B(r,r^*)\,dr$, with
$$R(r)=\sqrt[3]{r^{*^{3}}-r^{3}}\quad\text{ and }\quad 0\leq a\leq b\leq \frac{r^*}{\sqrt[3]{2}}.$$
 \uppercase\expandafter{\romannumeral3}. $\int_0^{\frac{r^*}{\sqrt[3]{2}}}B(r,r^*)\,dr=\int_{\frac{r^*}{\sqrt[3]{2}}}^{r^*}B(r,r^*)\,dr=1,$ which without loss of generality, both integrals to be one by renormalization.
\vskip0.3cm

Our  strategy  to solve the initial value problem for system \eqref{NSVB1}-\eqref{intial data} with assumptions (I-III) given above, consists in combining recent known techniques by the Feireisl-Novotn\'{y}-Petzeltov\'{a} \cite{FNP} by the use of their
 regularization method for solving the fluid system using the compressible  Navier-Stokes system,    in an iteration that couples such fluid equation to the initial value problem of the Vlasov-linear Boltzmann for the droplet particle evolution. For this coupling we adapt  a recent approach proposed by Legger and Vasseur \cite{LV} where they solved the same kinetic equation  coupled to a fluid given by the incompressible Navier-Stokes system.

\medskip

The manuscript is organized as follows. In the next Section 2 we introduce some fundamentals and prove, for a fixed droplet particle distribution $f(x,\xi,r,t)$,  the basic a priori momentum and energy identities for the compressible Navier Stokes' equation.

In section 3,  we introduce first the two level $\eps, \delta$-regularization technique  from \cite{FNP}
 to system \eqref{NSVB1}-\eqref{intial data} by adding as $\eps$-viscous term to  mass equation and an    $\eps$-modification of the momentum equation that preserves the energy identities   for fix $f(x,\xi,r,t)$, derived in section 2, and a $\delta$-modification that modify the pressure law.
 In addition, we employ the techniques from    \cite{FNP}, where
 each $\eps,\delta$-regularized  Navier Stokes   (\ref{NSVB1}-\ref{NSVB2})  part  is solved uniquely by  a  $k$-finite dimensional approximating model, introduced in \cite{FNP} and \cite{F04}. Then   for each $u^{\eps,\delta}_{\bf k}$,  we finally  the Vlasov-linear-Boltzmann equation \eqref{NSVB3}  using the approach of \cite{LV}, whole solution is an approximating $f^{\eps,\delta}_{\bf k}$.
  This iteration is shown to construct  unique solutions $(\rho^{\eps,\delta}_{\bf k}, u^{\eps,\delta}_{\bf k}, f^{\eps,\delta}_{\bf k})$   to the   $\eps,\delta, {\bf k}$-approximating system to (\ref{NSVB1}-\ref{NSVB2}-\ref{NSVB3}) by means of a fixed point argument in a Banach space, where  initial data is modified by  introducing the parameter $\underline{\rho}>0$ that keep our  the $\rho^{\eps,\delta}_{\bf k}$ estimates  bounded below from vacuum uniformly in $\eps,\delta$ and  ${\bf k}$.
In addition,  we show that the unique solutions  $(\rho^{\eps,\delta}_{\bf k}, u^{\eps,\delta}_{\bf k}, f^{\eps,\delta}_{\bf k})$ for the $\eps,\delta, {\bf k}$-approximating system, satisfy  momentum and energy identities, uniformly in $\eps, \delta$ and $ {\bf k}$,    and  the approximating density $\rho^{\eps,\delta}_{\bf k}$ is bounded below by $\underline{\rho}>0$ uniformly in $\eps$ and ${\bf k}$.

Finally,   in Section 4,   we study the limiting process to obtain a global weak solution to (\ref{NSVB1}-\ref{NSVB2}-\ref{NSVB3}),  by  first performing the limit  $k\to \infty$,  next the limit  $\eps\to 0$,  and last the  limit $\delta \to 0$ obtaining a  limiting triple $(\rho,u,f)$ whose initial data has $\rho(x,0)\ge \underline{\rho}>0$ for an arbitrary  $\underline{\rho}>0$. So the existence of solution in then proved for any initial data who density $\rho$ may vanish locally.

% , for each unique  $\eps,k$-solution of the Navier Stokes  (\ref{NSVB1}-\ref{NSVB2}) part.
%Next, for each $u^{\eps}_{\bf k}$, we  solve the coupling of the  $\eps$-regularized  Navier Stokes (\ref{NSVB1}-\ref{NSVB2}) part  of system to the Vlasov-linear-Boltzmann equation \eqref{NSVB3}
%using the approach of \cite{LV} and proved that

\bigskip

%%%%%%%%%%%%%%%%%%%%%%%%%%
\section{A Priori Estimates}

In this section, we derive some fundamental a priori estimates for each equation on the system \eqref{NSVB1}-\eqref{NSVB3}. They are crucial to show the existence of weak solutions upon passing to the limits in the regularized approximation scheme.

We first recall the notation of renormalized solutions, \cite{L, FNP,F04}. In fact, multiplying \eqref{NSVB1} by $b'(\rho)$ we deduce
\begin{equation}
\label{renomalized solution}
h(\rho)_t+\Dv(h(\rho)\u)+(h'(\rho)\rho-h(\rho))\Dv\u=0
\end{equation}
for any differentiable function $h$. Thus, we give the following definition.
\begin{Definition}
\label{definition of renoamalized solution}
Equation \eqref{NSVB1} is satisfied in the renormalized sense, more specifically,
equation \eqref{renomalized solution} holds in the distributional sense,
for any $h\in C^1(\R)$ such that
$$h'(z)=0\quad\text{ for all } |z|\geq M,$$
for some constant $M>0$.
\end{Definition}

Here, for the sake of simplicity we will consider the case of bounded domain with periodic boundary conditions, namely $\O=\mathbb{T}^3$.
In this paper, we assume that
\begin{equation}\label{2.4}
 \left\{
   \begin{array}{c}
\rho_{0}\geq 0 \; \text{ almost everywhere in } \O,\;
 \m_{0}\in L^2(\O),\\\quad \m_{0}=0\; \text{ almost everywhere on } \{\rho_{0}=0\},
\quad \frac{|\m_{0}|^2}{\rho_{0}} \in L^1(\O),
\\
 f_{0}\in L^{\infty}\cap L^1(\O\times\R^3\times\R^+),\quad r^3|\xi|^{3}f_{0} \in L^1(\O\times\R^3\times\R^+).
\end{array}
  \right.\end{equation}

\begin{Definition} \label{D1} The triple
 $(\rho,\u,f)$ is a global weak solution to problem \eqref{NSVB1}-\eqref{2.4} if,  for any $T>0$, the following properties hold,\\
 \romannumeral1. $\rho\geq 0, \quad \rho \in C([0,T];L^{\gamma}(\O)),$ $\u \in L^{2}(0,T;H^{1}_0(\O)),$ $\rho|\u|^{2} \in L^{\infty}(0,T;L^{1}(\O));$\\
 \romannumeral2. $f(t,x,\xi,r)\geq 0, \text { for any } (t,x,\xi,r ) \in (0,T)\times \O\times \R^3\times\R^+;$\\
 \romannumeral3.  $f \in L^{\infty}(0,T;L^{\infty}(\O\times\R^3\times\R^+) \cap L^1(\O\times\R^3\times\R^+));$\\
\romannumeral4.  $r^3|\xi|^3f \in L^{\infty}(0,T;L^1(\O\times\R^3\times\R^+));$\\
\romannumeral5. Equation \eqref{NSVB1} is satisfied in the renormalized sense.\\
\romannumeral6. For any $\varphi \in C^{1}([0,T]\times \O)$, for almost everywhere $t$, the following identify holds
\begin{equation}
\label{weak-1}
\begin{split}
&\quad\quad-\int_{\O}\m_{0}\cdot\varphi(0,x)\;dx+\int_{0}^{t}\!\!\int_{\O}\bigg(-\rho\u\cdot\partial_{t}\varphi
-(\rho\u\otimes\u):\nabla\varphi-\rho^{\gamma}\nabla\varphi
\\&\qquad\qquad\qquad\qquad
+\mu \nabla\u\cdot\nabla\varphi+\lambda\Dv\u\Dv\varphi+\int_{\R^3}r f(\u-\xi)\cdot \varphi \,d\xi\,d r\bigg)\;dxdt=0;
\end{split}
\end{equation}
\romannumeral7. For any $\phi \in C^1([0,T]\times\O\times\R^3\times\R^+)$ with compact support with respect to $x$, $\xi$, and $r$,  such that $\phi(T,\cdot,\cdot,\cdot)=0$, the following identify holds
\begin{equation}
\label{weak-2}
\begin{split}
&\quad-\int_{0}^{T}\!\!\!\int_{\O}\int_{\R^3}f\left({\partial_t\phi+\xi\cdot\nabla_{x}\phi+\frac{(\u-\xi)}{r^2} \cdot\nabla_{\xi}\phi}\right)\;dxd\xi ds
\\&\quad\quad\quad\quad\quad=\int_{\O}\int_{\R^3}f_{0}\phi(0,\cdot,\cdot)\;dxd\xi+\int_0^T\int_{\O}Q(f) \phi\,dx\,dt;
\end{split}
\end{equation}
\romannumeral8. The energy inequality
\begin{equation}
\label{2.3+}
\begin{split}
&\int_{\O}\rho|\u|^{2}dx+\int_{\O}\int_{\R^3}f(1+|\xi|^{2})\;d\xi dx
 +2\mu\int_{0}^{T}\!\!\!\int_{\O}|\nabla\u|^2\;dxdt+2\lambda\int_{0}^{T}\!\!\!\int_{\O}|\Dv\u|^2\;dxdt
\\&\leq\int_{\O}\frac{|\m_{0}|^2}{\rho_{0}}\;dx+\int_{\O}\int_{\R^3}(1+|\xi|^{2})f_{0}\; d\xi dx
\end{split}
\end{equation}
\text{holds for almost everywhere} $t\in[0,T].$
 \end{Definition}

Our main result on existence of global weak solutions reads as follows.
 \begin{Theorem}\label{T1}
 Under the assumption \eqref{2.4}, for any $\gamma >\frac{3}{2}$, there exists a global weak solution $(\rho,\u,f)$ to the initial value problem \eqref{NSVB1}-\eqref{intial data} for any $T>0$.
 %And the function $f$ satisfies the $L^{\infty}$ bound
 %\begin{equation}
 %\label{2.11+}
 %\|f\|_{L^{\infty}((0,T)\times\O\times\R^3)}\leq e^{3C T}\|f_{0}\|_{L^{\infty}(\O\times\R^{3})},
 %\end{equation}
 %where $\|\mu(\rho)\|_{L^{\infty}(\O)}\leq C.$
 \end{Theorem}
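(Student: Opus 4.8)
The plan is to construct $(\rho,\u,f)$ as the limit of a family of smooth approximate solutions built by a three-parameter regularization, and then to recover the weak formulation of Definition \ref{D1} by a careful passage to the limit. Following \cite{FNP}, I would first regularize the fluid subsystem: add an artificial viscosity $\eps\D\rho$ to the continuity equation \eqref{NSVB1}, insert the matching $\eps$-correction into the momentum equation \eqref{NSVB2} so that the energy balance is preserved, and replace the pressure $\rho^\gamma$ by $\rho^\gamma+\dl\rho^\beta$ with $\beta$ large enough to force integrability. On top of this I would project the momentum equation onto a finite-dimensional space $X_{\bf k}=\mathrm{span}\{e_1,\dots,e_{\bf k}\}$, reducing it to a system of ODEs solvable by the fixed-point argument of \cite{FNP,F04}. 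The lower bound $\rho\ge\underline\rho>0$, obtained from the parabolic maximum principle applied to the $\eps$-regularized continuity equation, is exactly what keeps all the nonlinear quantities (the drag force, the pressure) well defined during the construction.

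The coupling is handled by the iteration of \cite{LV}: for a fixed velocity field $\u^{\eps,\dl}_{\bf k}$ the Vlasov--Boltzmann equation \eqref{NSVB3} is a linear transport equation in $(x,\xi)$ with prescribed drift $\frac{(\u-\xi)}{r^2}$ and linear source $Q(f)$, hence it has a unique solution $f^{\eps,\dl}_{\bf k}$ by DiPerna--Lions theory; feeding the resulting drag force back into the momentum equation and closing the loop by a fixed-point argument in a suitable Banach space yields the approximate triple $(\rho^{\eps,\dl}_{\bf k},\u^{\eps,\dl}_{\bf k},f^{\eps,\dl}_{\bf k})$. Two structural facts must be checked here. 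First, $f\ge 0$ is propagated, because the characteristics remain in the domain and the gain part of $Q$ in \eqref{operator} is nonnegative. Second, using \textbf{hypotheses A} I would verify that $Q$ does not destroy the relevant mass moments, so that $\int_a^b\!\int_{\R^3} f\,d\xi\,dr$ stays controlled, while a Gronwall argument on the kinetic energy gives the bound $r^3|\xi|^3 f\in L^1$ of items (iii)--(iv) of Definition \ref{D1}.

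The heart of the construction is the \emph{uniform} energy estimate. Testing the momentum equation by $\u$ and the kinetic equation by the matching kinetic-energy weight, and adding the two identities, the two drag contributions combine into
\begin{equation}
\int_a^b\!\!\int_{\R^3} r f(\u-\xi)\cd\u\,d\xi\,dr-\int_a^b\!\!\int_{\R^3} r f(\u-\xi)\cd\xi\,d\xi\,dr=\int_a^b\!\!\int_{\R^3} r f|\u-\xi|^2\,d\xi\,dr\ge 0,
\end{equation}
so the coupling is dissipative and the total energy \eqref{2.3+} is controlled by the initial energy uniformly in $\eps,\dl,{\bf k}$. This provides $\rho|\u|^2\in L^\infty_tL^1_x$, $\u\in L^2_tH^1_x$, control of the pressure and of the $\eps,\dl$-terms, and the velocity-weighted kinetic bounds.

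Passing to the limit is carried out in the order ${\bf k}\to\infty$, then $\eps\to 0$, then $\dl\to 0$. For each limit the delicate point is the one already present in the pure compressible Navier--Stokes system: the pressure is nonlinear, so weak convergence of $\rho$ is insufficient and must be upgraded to \emph{strong} convergence in $L^1$. I would do this by the Lions--Feireisl machinery: establish the weak continuity of the effective viscous flux $\rho^\gamma-(2\mu+\l)\Dv\u$, pass the continuity equation to the limit in the renormalized sense (item (v)), and use the renormalized equation with $b(\rho)=\rho\log\rho$ to show that the defect $\overline{\rho\log\rho}-\rho\log\rho$ is transported and vanishes. The assumption $\gamma>3/2$ enters precisely here, in line with \cite{FNP}. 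For the coupling term $\int_a^b\!\int_{\R^3} r f(\u-\xi)\,d\xi\,dr$ I would obtain strong compactness of the moments $\mathfrak{n}$ and $j$ via velocity-averaging lemmas applied to \eqref{NSVB3}, which together with the convergence of $\u$ lets the product pass to the limit; a final step $\underline\rho\to 0$ then removes the artificial lower bound and allows the density to vanish locally. The main obstacle is the interplay of these two compactness mechanisms: one must ensure that the defect measures controlling the density oscillations are not spoiled by the $f$-dependent force, and that the $r^3|\xi|^3 f$ bound keeps the drag force in a space where it remains a compact perturbation throughout all three limits.
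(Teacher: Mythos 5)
Your proposal follows essentially the same route as the paper: the Feireisl--Novotn\'y--Petzeltov\'a $\eps,\delta$-regularization with Galerkin projection onto $X_{\bf k}$, the Leger--Vasseur-style coupling of the linear Vlasov--Boltzmann equation closed by a Banach fixed point, the combined fluid--kinetic energy identity in which the drag terms sum to the dissipative quantity $\int r f|\u-\xi|^2$, and the three-step limit ${\bf k}\to\infty$, $\eps\to 0$, $\delta\to 0$ with Lions--Feireisl pressure compactness under $\gamma>\tfrac{3}{2}$. Your only deviations are interchangeable standard tools at isolated steps --- DiPerna--Lions theory in place of the paper's explicit characteristics (with Lipschitz dependence of the moments $\mathfrak{n}=N(\u)$, $j=L(\u)$ on $\u$) for the kinetic solve, and velocity averaging in place of the paper's Lions product-compactness lemma (using $(\mathfrak{n}_k)_t=-\Dv j_k$ together with the $H^1$ bound on $\u_k$) to pass the coupling terms to the limit --- so the argument is sound and substantively the same.
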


\

We start  now to gather estimates for the momentum equation. Multiplying \eqref{NSVB2} by $\u$, integrating over $\O$, and using \eqref{NSVB1}, we deduce that
\begin{equation}\label{energy for NS}
\begin{split}
&\frac{d}{dt}\int_{\O}\frac{1}{2}\left(\rho|\u|^{2}+\frac{\rho^{\gamma}}{\gamma-1}\right)\;dx+\mu\int_{\O}|\nabla\u|^{2}\;dx
+\lambda\int_{\O}|\Dv\u|^2\,dx
\\&=-\int_a^b\int_{\O}\int_{\R^3}r f(\u-\xi)\cdot\u \,d\xi\; dx\,dr.
\end{split}
\end{equation}
Meanwhile, multiplying the Vlasov-Boltzmann equation \eqref{NSVB3} by $r^3\frac{|\xi|^{2}}{2}$, taking integration with respects to $r,\xi,x$, and using integration by parts, one obtains

\begin{equation}\begin{split}\label{energy for VB}
&\frac{d}{dt}\int_a^b\int_{\O}\int_{\R^3}\frac{1}{2} r^3|\xi|^2 f\,d\xi\,dx\,dr-\int_a^b\int_{\O}\int_{\R^3}r(\u-\xi)\xi f\,d\xi\,dx\,dr
\\&\quad\quad\quad\quad\quad\quad\quad\quad\quad\quad\quad\quad\quad\quad=\int_a^b\int_{\O}\int_{\R^3}r^3|\xi|^2 Q(f)\,d\xi\,dx\,dr.
\end{split}\end{equation}
Thus, from \eqref{energy for NS} and \eqref{energy for VB},
the following energy equality holds
\begin{equation}
\label{energy inequality-1}
\begin{split}
&\frac{d}{dt}\int_{\O}\left(\rho|\u|^{2}+\frac{\rho^{\gamma}}{\gamma-1}\right)\;dx+\frac{d}{dt}\int_a^b\int_{\O}\int_{\R^3} r^3|\xi|^2 f\,d\xi\,dx\,dr
\\&+2\mu\int_{\O}|\nabla\u|^{2}\;dx
+2\lambda\int_{\O}|\Dv\u|^2\,dx
+2\int_a^b\int_{\O}\int_{\R^3}r f(\u-\xi)^2 \,d\xi\; dx\,dr=0,
\end{split}
\end{equation}
where we used the following equality
$$\int_a^b\int_{\O}\int_{\R^3}r^3|\xi|^2Q(f)\,d\xi\,dx\,dr=0.$$
In fact, the last identity is obtained from the following Lemma \ref{Lemma of operator}(setting $p=2$), that uses the properties \uppercase\expandafter{\romannumeral2}-\uppercase\expandafter{\romannumeral5} on $Q(f)$ from hypotheses A.
\begin{Lemma}
\label{Lemma of operator} Under the properties \uppercase\expandafter{\romannumeral2}-\uppercase\expandafter{\romannumeral5} on $Q(f)$ from hypotheses A, then
for any $p\geq 1$, we have
\begin{equation}
\label{identity for any p}
\int_a^b\int_{\O}\int_{\R^3}r^3|\xi|^p Q(f) \,d\xi\; dx\,dr=0.
\end{equation}
\end{Lemma}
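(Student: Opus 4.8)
The operator $Q$ in \eqref{operator} acts only on the radius variable $r$, pointwise in $(x,\xi,t)$, so the factor $|\xi|^p$ together with the integrations in $x$ and $\xi$ are mere spectators. Accordingly my first step is to reduce \eqref{identity for any p} to the pointwise-in-$(x,\xi,t)$ statement
\begin{equation*}
\int_a^b r^3\,Q(f)(x,\xi,r,t)\,dr=0,
\end{equation*}
which, once established, yields \eqref{identity for any p} simultaneously for every $p\ge 1$ after multiplying by $|\xi|^p$ and integrating in $\xi$ and $x$; this is also why the exponent $p$ is irrelevant to the argument. The interchange of integrals used below is legitimate by Tonelli's theorem, since $f$ and $B\ge 0$ are nonnegative and $B$ is bounded by hypothesis I.

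Writing $Q(f)=-\nu f+\nu\,\mathcal G(f)$ with $\mathcal G$ the gain term of \eqref{operator}, the loss part contributes $-\nu\int_a^b r^3 f\,dr$. For the gain part I would swap the order of the $r$- and $r^*$-integrations, transferring the cubic weight onto the kernel and producing a factor $\int r^3 B(r,r^*)\,dr$ paired against $f(\cdot,\cdot,r^*,\cdot)$. The whole lemma then rests on the droplet-volume balance
\begin{equation*}
\int_0^{r^*} r^3\,B(r,r^*)\,dr=(r^*)^3 ,
\end{equation*}
which says precisely that breakup conserves the total volume of the droplet phase; granting it, the gain part equals $+\nu\int (r^*)^3 f\,dr^*$, which cancels the loss part after relabelling $r^*\mapsto r$.

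The core computation is this balance, and for it I would use the volume-complementary map $R(r)=\sqrt[3]{(r^*)^3-r^3}$ from hypothesis II, a decreasing involution satisfying $r^3+R(r)^3=(r^*)^3$, $R'(r)=-r^2/R(r)^2$, with the single fixed point $r^*/\sqrt[3]{2}$. Differentiating the symmetry relation of hypothesis II with respect to an endpoint, and accounting for the orientation reversal of $R$, gives the pointwise symmetry
\begin{equation*}
B(r,r^*)=\frac{r^2}{R(r)^2}\,B(R(r),r^*).
\end{equation*}
I would then split $\int_0^{r^*} r^3 B\,dr$ at $r^*/\sqrt[3]{2}$ and, in the upper piece, substitute $r=R(s)$: the Jacobian $|R'(s)|=s^2/R(s)^2$ cancels against the pointwise symmetry while $r^3=R(s)^3=(r^*)^3-s^3$, so that the upper piece equals $\int_0^{r^*/\sqrt[3]{2}}\big((r^*)^3-s^3\big)B(s,r^*)\,ds$. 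Adding the two pieces annihilates the $s^3$ terms and leaves $(r^*)^3\int_0^{r^*/\sqrt[3]{2}}B(s,r^*)\,ds=(r^*)^3$ by the normalization in hypothesis III.

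The step I expect to be the main obstacle is precisely this orientation-reversing change of variables $r\mapsto R(r)$: one must carry the negative Jacobian, reverse the limits of integration correctly, and match the physical radius window $[a,b]$ against its image $[R(b),R(a)]$ so that hypotheses II and III are applied on compatible sub-intervals. Reconciling the number-type normalization of hypothesis III with the cubic volume weight is the one place where the symmetry hypothesis is truly indispensable; everything else—the Fubini interchange and the elementary algebra—is routine once the integrability coming from $r^3|\xi|^p f\in L^1$ and the regularity of $B$ are invoked.
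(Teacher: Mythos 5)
Your proof is correct, and its skeleton coincides with the paper's: split $Q$ into loss and gain parts, interchange the $r$- and $r^*$-integrations (legitimate by Tonelli since $f,B\ge 0$), reduce everything to the volume balance $\int_0^{r^*} r^3 B(r,r^*)\,dr=(r^*)^3$, and cancel against the loss term after relabelling; in both arguments the factor $|\xi|^p$ rides along inertly, which is why the identity holds for every $p\ge 1$. The genuine difference is that the paper never proves the volume balance: it simply asserts that from Leger--Vasseur \cite{LV} ``one can see'' that the hypotheses yield $\int_{r^*>r} r^3B\,dr=(r^*)^3$, whereas you derive it from hypotheses I--III. Your derivation checks out: differentiating the interval identity of hypothesis II at the endpoint gives the pointwise symmetry $B(r,r^*)=\frac{r^2}{R(r)^2}\,B(R(r),r^*)$ on $[0,r^*/\sqrt[3]{2}]$ (this uses $B\in C^1$ from hypothesis I, and silently corrects the orientation of the paper's $\int_{R(a)}^{R(b)}$, which is backwards as written since $R$ is decreasing); then in the substitution $r=R(s)$ on the upper half-interval the Jacobian $s^2/R(s)^2$ cancels the symmetry factor, $R(s)^3=(r^*)^3-s^3$ annihilates the cubic terms upon adding the two pieces, and hypothesis III's normalization produces $(r^*)^3$. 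So your argument buys self-containedness where the paper buys brevity by citation (indeed the paper's reference to ``properties II--V'' is a numbering slip, since hypotheses A only list I--III). One gloss you share with the paper: after Fubini the daughter-radius integral a priori runs over $[a,b]\cap(0,r^*)$ rather than all of $(0,r^*)$, and both proofs extend it without comment; you at least flag the matching of $[a,b]$ against $[R(b),R(a)]$ as the delicate point, so this is not a gap relative to the paper's own treatment.
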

\begin{proof}
\begin{equation*}
\begin{split}
&\int_a^b\int_{\O}\int_{\R^3}r^3|\xi|^pQ(f)\,d\xi\,dx\,dr=-\nu\int_a^b\int_{\O}\int_{\R^3}r^3|\xi|^p f(x,\xi,r,t)\,d\xi\,dx\,dr
\\&+\nu\int_a^b\int_{\O}\int_{\R^3}\int_{r^*>r}r^3|\xi|^pB(r^*,r) f(x,\xi,r^*,t)\,dr^*\,d\xi\,dx\,dr
\\&=-\nu\int_a^b\int_{\O}\int_{\R^3}r^3|\xi|^p f(x,\xi,r,t)\,d\xi\,dx\,dr
\\&+\nu\int_a^b\int_{\O}\int_{\R^3}|\xi|^p\left(\int_{r^*>r}r^3B(r^*,r)\,dr\right)f(x,\xi,r^*,t)\,dr^*\,d\xi\,dx.
\end{split}
\end{equation*}
From Leger-Vasseur\cite{LV}, one can see that the properties \uppercase\expandafter{\romannumeral2}-\uppercase\expandafter{\romannumeral5} on $Q(f)$ yield
$$\int_{r^*>r}r^3B(r^*,r)\,dr=(r^*)^3,$$
so replacing in the second term one obtains a  symmetrization property yielding the zero integral, hence yield \eqref{identity for any p} holds.
\end{proof}
\bigskip
Next we estimate the transport Vlasov-Boltzmann equation \eqref{NSVB3} multiplying  by $r^3$ and integrating with respects to $r,\xi,x$, and using integration by parts, one obtains that
\begin{equation}
\label{conservation}
\frac{d}{dt}\int_a^b\int_{\O}\int_{\R^3}r^3f (x,\xi,r,t)\,d\xi\,dx\,dr=0.\end{equation}
In fact, this was proved in \cite{LV}. Using \eqref{energy inequality-1} and \eqref{conservation}, one obtains the following energy identity
\begin{equation}
\label{energy inequality}
\begin{split}
&\frac{d}{dt}\int_{\O}\left(\rho|\u|^{2}+\frac{\rho^{\gamma}}{\gamma-1}\right)\;dx+\frac{d}{dt}\int_a^b\int_{\O}\int_{\R^3} r^3(|\xi|^2+1) f\,d\xi\,dx\,dr
\\&+2\mu\int_{\O}|\nabla\u|^{2}\;dx
+2\lambda\int_{\O}|\Dv\u|^2\,dx
+2\int_a^b\int_{\O}\int_{\R^3}r f(\u-\xi)^2 \,d\xi\; dx\,dr=0.
\end{split}
\end{equation}

\vskip0.3cm
\bigskip

\section{Regularization}
In  order to prove Theorem~\ref{T1}, motivated by the techniques developed by Feireisl-Novotn\'{y}-Petzeltov\'{a} \cite{FNP} and  the work of Feireisl \cite{F04}, we first regularize the system \eqref{operator}-\eqref{NSVB3} by perturbing both the mass and momentum equations,  \eqref{NSVB1} and  \eqref{NSVB2} respectively, by adding $\varepsilon$-viscous terms and the $\delta$-modified pressure as follows (for simplicity we will not denote the solutions $(\rho, \u, f)$ dependance on the parameters $\eps$ and $\delta$ in this section)
\begin{equation}
\begin{split}
\label{approximation-1}
&\rho_t+\Dv(\rho\u)=\varepsilon \D\rho,
\\&
(\rho\u)_{t}+\Dv(\rho\u\otimes \u)+\nabla \rho^{\gamma}+\delta\nabla\rho^{\beta}-\mu\D\u-\lambda\nabla\Dv\u-\varepsilon\nabla\u\cdot\nabla\rho+\mathfrak{n}\u=j,
\\&
f_{t}+\xi\cdot\nabla_{x}f+\Dv_{\xi}{\left(\frac{(\u-\xi)f}{r^2}\right)}=Q(f),
\end{split}
\end{equation}
where $$\mathfrak{n}(t,x)=\int_a^b\int_{\R^3}r f\,d\xi\,dr,\;\;j=\int_a^b\int_{\R^3}r \xi f\,d\xi\,dr,$$ and $Q(f)$ is given by \eqref{operator}.

Assume initial data $(\rho_0,\u_0,f_0)$ satisfying
\begin{equation}
\label{regularized initial data}
\begin{split}
&\rho(0)=\rho_0(x)\in C^{2+\nu}(\bar{\O}), \quad\quad0<\underline{\rho}\leq \rho_0\leq \bar{\rho},
\\&(\rho\u)(0)=\m_0,\quad\quad \m_0=(m_0^1,m_0^2,m_0^3),\quad \text{ where } m_0^i\in C^2(\bar{\O}),
\\& f(0)=f_0(x,\xi,r),\quad f_0\geq 0, \;\; f_0\in L^{\infty}(\O\times\R^3\times R^+)\cap L^1(\O\times\R^3\times R^+)\;
\\&\text{and it is compactly supported with respects to } r, \xi.
\end{split}
\end{equation}

In order to solve $\varepsilon, \delta$-regularized Navier-Stokes part  of system \eqref{operator}-\eqref{NSVB3},  we need to show  that first moment $j(x,t)$ is bounded in $L^p(0,T;L^q(\O))$, for some $p,q>1$,  where of the  $j(x,t)$, the solution for  Vlasov-Boltzmann transport equation kinetic equation  \eqref{NSVB3}, is a source term in the $\varepsilon,\delta$-regularized momentum equation of Navier-Stokes part  of system.

 In addition,  the compressible $\varepsilon,\delta$-regularized Navier-Stokes part can be solved by using the  approximate by finete dimensional spaces arguments as in  Feireisl-Novotn\'{y}-Petzeltov\'{a} \cite{FNP} and  Feireisl \cite{F04} for fluid systems models as follows.

 We define the following  finite dimensional Banach space $X_k=\text{span}\{e_1,e_2,....,e_k\}$, for  $n\in \mathbb{N}$, and  each $e_i$ is an orthogonal basis of $L^2(\O)$, which is also an orthogonal basis of $H^2(\O).$

 In particular, $e_i$ could be chosen by  $-\D e_i=\lambda_i e_i.$, that is eigenfuctions of the Laplace operator acting  over the domain $\O$.

 Thus, without loss of generality, we consider an  infinite sequence of finite dimensional spaces
 \begin{equation}\label{Xk}
 X_k=\text{span}\{e_i\}_{i=1}^k,\quad k=1,2,3... , \, ,
 \end{equation}
  and will construct a sequences of triples $(\rho_k,\u_k,f_k)$ solutions of the following $k,\varepsilon$-approximate problem:

{\bf Step 1:}\  Starting from $\u_{k-1} $  given in $ C([0,T];X_{k-1}),$  where $X_{k-1}=\text{span}\{e_1,e_2,....,e_{k-1}\}$  solve the following initial value problem for the Vlasov-Boltzmann transport equation \eqref{NSVB3}.

For any  $f_0\in L^{\infty}(\O\times\R^3\times\R^+)\cap L^1(\O\times\R^3\times\R^+)$ with
 $f_0\geq 0,$ and $\supp f_0\subset \O\times\R^3$, solve the
 Vlasov-Boltzmann transport equation
 \begin{equation}\begin{split}
\label{kinetic}
&\partial t f_k+\xi\cdot\nabla_x f_k+\Dv_{\xi}\left(\frac{\u_{k-1}-\xi}{r^2}f_k\right)=Q(f_k)(x,\xi,r,t), \ \ \ \forall\ t>0\, , \\
&\quad f_k(x,\xi,r,0)=f_0(x,\xi,r) \ \ \text{for all}     \ \   (x,\xi,r) \ \in   \O\times\R^3\times\R^+\ .
\end{split}\end{equation}
and show the the first moment  $j_k(x,t)= \int (\xi,r) f_k(x,\xi,r,t) d \xi dr$  associated to
is bounded in  $ L^{\infty}(0,T;L^2(\O)).$

\

{\bf Step 2:} For any initial data density-velocity pair $(\rho_k,\u_k)(x,0)$ satisfying
$\rho_k \in L^\gamma(X_k))$, $\u_k \in  L^2(X_k) $ and $\nabla\u_k \in L^2(X_k), $
there is a unique weak $k,\varepsilon$- approximate solution triple $\rho_k \in L^\infty([0,T]; L^\gamma(X_k) )$, $\u_k \in L^\infty([0,T]; L^2(X_k)  )$ and $\nabla\u_k \in  L^2([0,T]; L^2(X_k)  )$
satisfying  the integral equation
\begin{equation}
\begin{split}
\label{integral equation-1}
&\int_{\O}\rho\u_k(t)\cdot\varphi\,dx-\int_{\O}\m_0\cdot\varphi\,dx
=\int_0^t\int_{\O}\left(\mu\D\u_k+\lambda\nabla\Dv\u_k\right)\varphi\,dx\,dt
\\&+\int_0^T\int_{\O}\left(\varepsilon\nabla\u_k\cdot\nabla\rho-\Dv(\rho\u_k\otimes \u_k)-\nabla \rho^{\gamma}-\delta\nabla\rho^{\beta}-\mathfrak{n}\u_k+j\right)\varphi\,dx\,dt
\end{split}
\end{equation}
for any test function $\varphi\in X_k$.

\begin{Proposition}\label{k-approx-problem}
For any initial data $(\rho_0,\u_0)(x,0)$ with $\rho_0 \in L^\gamma(\O))$, $\u_0 \in  L^2(\O) $ and $\nabla\u_0 \in L^2(\O), $,  and  $f_0\in L^{\infty}(\O\times\R^3\times\R^+)\cap L^1(\O\times\R^3\times\R^+)$, there exits a weak solution to system \eqref{integral equation-1}-\eqref{kinetic} denoted by the triple $(\rho_k,\u_k, f_k)$ in the spaces
$L^\infty([0,T]; L^\gamma(X_k) \times  L^\infty([0,T]; L^2(X_k)  ) \times (f_0\in L^{\infty}(\O\times\R^3\times\R^+)\cap L^1(\O\times\R^3\times\R^+))$.

In addition the triple components are {\em uniformly bounded} in the $k$ and $\varepsilon$ parameters.
\end{Proposition}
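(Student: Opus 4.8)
The plan is to build the triple $(\rho_k,\u_k,f_k)$ at each fixed Galerkin level $k$ by a fixed point argument that decouples the system into the two steps already isolated in the statement: given a candidate velocity field $\v\in C([0,T];X_k)$ (playing the role of $\u_{k-1}$), first solve the kinetic equation, then feed the resulting moments $\mathfrak{n},j$ as source terms into the regularized Navier-Stokes part, and finally show the composed map has a fixed point.

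\textbf{Step 1 (kinetic solve).} Fix $\v\in C([0,T];X_k)$. Because $X_k$ is finite dimensional, $\v(t,\cdot)$ is smooth and bounded in $x$ with all derivatives controlled by $\|\v\|_{C([0,T];X_k)}$, so the drift $\frac{\v-\xi}{r^2}$ is globally Lipschitz on the relevant range of $r\in[a,b]$ and \eqref{kinetic} is a genuine linear Vlasov transport equation with the bounded gain--loss operator $Q$. I would solve it by the characteristics/Duhamel scheme of Leger-Vasseur \cite{LV}: the phase-space flow is well defined, giving $f_k\ge 0$, the $L^1\cap L^\infty$ bounds, and the moment identities \eqref{conservation} together with Lemma~\ref{Lemma of operator}. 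The estimate $j_k\in L^\infty(0,T;L^2(\O))$ then follows by interpolating the uniform $L^\infty$ bound against the third velocity moment, schematically $\int_{\R^3} r|\xi| f_k\,d\xi \lesssim \|f_k\|_{L^\infty}^{1/3}\big(\int_{\R^3} r^3|\xi|^3 f_k\,d\xi\big)^{2/3}$, the latter moment being finite initially by \eqref{regularized initial data} and propagated because the bounded drift stretches the $\xi$-support only at a controlled rate. Crucially these bounds depend only on $\|\v\|_{C([0,T];X_k)}$ and the data, not on $\varepsilon,\delta$.

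\textbf{Step 2 (fluid solve).} With $\mathfrak{n},j$ now given, I follow Feireisl-Novotn\'y-Petzeltov\'a \cite{FNP,F04}. For fixed $\u\in C([0,T];X_k)$ the regularized continuity equation $\rho_t+\Dv(\rho\u)=\varepsilon\D\rho$ is linear uniformly parabolic, so it has a unique classical solution $\rho=\rho[\u]\in C([0,T];C^{2+\nu})$ obeying the maximum-principle bounds $0<\underline{\rho}\,e^{-\int_0^t\|\Dv\u\|_\infty} \le \rho \le \bar\rho\,e^{\int_0^t\|\Dv\u\|_\infty}$, hence strictly away from vacuum. Substituting into \eqref{integral equation-1} and introducing the mass operator $M[\rho(t)]\colon X_k\to X_k^*$ defined by $\langle M[\rho]\u,\varphi\rangle=\int_\O\rho\,\u\cdot\varphi\,dx$, which is invertible since $\rho>0$, the projected momentum balance becomes an ODE $\frac{d}{dt}\big(M[\rho[\u]]\u\big)=\mathcal{N}(\u,\mathfrak{n},j)$ on $X_k$, solvable uniquely on a short interval by the Banach fixed point theorem.

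\textbf{Closing the coupling and uniform bounds.} The two steps define a map $\mathcal{S}\colon\v\mapsto\u$ on a ball of $C([0,T'];X_k)$; using the Lipschitz dependence of $f_k$ (hence $\mathfrak{n},j$) on $\v$ from the characteristic representation, and of $\rho[\u],\u$ on the source, I would show $\mathcal{S}$ is a contraction for small $T'$, producing $(\rho_k,\u_k,f_k)$. To reach arbitrary $T$ and obtain the claimed uniform-in-$(k,\varepsilon)$ bounds, I test the momentum equation with $\u_k$ and combine with the continuity and kinetic equations exactly as in Section~2: the terms $\mathfrak{n}\u_k-j$ reproduce the drag $\int\!\int r f_k(\u_k-\xi)\cdot\u_k$, the $\varepsilon$-terms assemble into nonnegative dissipation, and $\delta\nabla\rho^\beta$ adds a controlled pressure potential. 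The resulting regularized analogue of \eqref{energy inequality}, with \eqref{conservation} and Lemma~\ref{Lemma of operator}, bounds $\rho_k|\u_k|^2$, $\rho_k^\gamma$, $\delta\rho_k^\beta$ and $\int r^3(1+|\xi|^2)f_k$ in $L^\infty_t$ and $\nabla\u_k$ in $L^2_{t,x}$ uniformly in $k,\varepsilon$, simultaneously excluding finite-time blow-up of the ODE and yielding the stated bounds. I expect the genuinely delicate point to be Step~1: propagating enough velocity moments of $f_k$ to place $j_k$ in $L^\infty(0,T;L^2(\O))$ with a constant that is Lipschitz in $\v$ and independent of $\varepsilon,\delta$, since the drag $\frac{\v-\xi}{r^2}$ can stretch the $\xi$-support and this growth must be tracked against the gain term of $Q$; once $j$ is known, Step~2 is essentially the established \cite{FNP} construction.
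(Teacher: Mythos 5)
Your overall architecture is the same as the paper's: a decoupled iteration in which the kinetic equation \eqref{kinetic} is solved by characteristics/Duhamel for a given $\v\in C([0,T];X_k)$ (Propositions \ref{proposition of kinetic equation} and \ref{pro of f}), the moments $\mathfrak{n},j$ are fed into the $\varepsilon,\delta$-regularized fluid part via the Feireisl--Novotn\'y--Petzeltov\'a operators $\mathcal{S}$ and $\mathcal{M}[\rho]$ (Propositions \ref{pro1} and \ref{pro2}), a Banach fixed point closes the loop on a short interval, and the energy identity obtained by testing \eqref{integral equation-1} with $\u_k$ and adding the kinetic energy conservation of Proposition \ref{energy equality for kinetic part} extends the solution to $[0,T]$ and yields the uniform bounds \eqref{coro-energy}, exactly as in Proposition \ref{Proposition at the first level}. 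However, there is one genuine quantitative gap, and it sits precisely at the point you yourself identify as delicate: your interpolation does not give $j_k\in L^{\infty}(0,T;L^2(\O))$. The pointwise bound $\int_a^b\int_{\R^3} r|\xi| f\,d\xi\,dr \lesssim \|f\|_{L^{\infty}}^{1/3}\bigl(\int_a^b\int_{\R^3} r^3|\xi|^3 f\,d\xi\,dr\bigr)^{2/3}$ is correct, but the third moment $m_3(x)$ is controlled only in $L^{\infty}(0,T;L^1(\O))$ by Lemma \ref{Lemma of moment}; taking the $L^2_x$ norm of $m_3^{2/3}$ would require $m_3\in L^{4/3}(\O)$, which you do not have. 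What actually follows from the third moment is $j_k\in L^{\infty}(0,T;L^{3/2}(\O))$ --- precisely the exponent of Lemma \ref{Lemma of n-j} with $N=p=3$, i.e.\ \eqref{first moment estimate in approximation}. To place $j_k$ in $L^2$, the paper propagates the \emph{fifth} moment: Lemma \ref{Lemma of moment} with $p=5$, which uses $\u_{k-1}\in L^2(0,T;L^{\infty}(\O))$ (available only because $X_k$ is finite dimensional) together with $r^3|\xi|^5 f_0\in L^1$ (available because \eqref{regularized initial data} assumes compact support in $\xi,r$), and then Lemma \ref{Lemma of n-j} with $p=5$, $N=3$ gives \eqref{j L^2}.

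The fix is within your own framework, and in fact your support remark nearly supplies it: along the characteristics one has $\frac{d}{dt}|\xi|^2 = 2\xi\cdot(\v-\xi)/r^2<0$ whenever $|\xi|>\|\v\|_{L^{\infty}}$, and $Q$ preserves velocities, so the $\xi$-support of $f_k$ remains in a ball of radius $\max(R_0,\|\v\|_{C([0,T];X_k)})$; hence all moments, including the fifth, stay finite. But note that either repair produces a constant depending on $\|\v\|_{C([0,T];X_k)}$, so the $L^2$ bound on $j_k$ is $k$-dependent --- acceptable for the fixed-point step, but you should not list it among the bounds claimed uniform in $k$ and $\varepsilon$. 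The uniform claims in the proposition rest only on the energy inequality \eqref{approximation-energy inequality} (via $\u_k\in L^2(0,T;L^6(\O))$ and the $N=p=3$ moment estimates, as in Lemma \ref{Lemma n-j approximation}), which your energy step correctly reproduces.
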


The proof of Proposition~\ref{k-approx-problem} is rather elaborated and will be done in several steps that gather the necessary estimates. So we start proving or recalling the following results

\bigskip

The first result towards addressing the {{\bf Part 1} of the $k$-iteration argument, was proved in Leger-Vasseur \cite{LV}.
 \begin{Proposition}
 \label{proposition of kinetic equation}
 For any given $\u \in C([0,T],C(\O))$, there exist a unique non-negative weak solution to the kinetic problem \eqref{kinetic} for any $T>0$ , provided the initial data satisfies
 $$f_0\in L^{\infty}(\O\times\R^3\times\R^+)\cap L^1(\O\times\R^3\times\R^+)$$ and
 $$f_0\geq 0,\;\;\supp f_0\subset \O\times\R^3, $$
 that is,  $f(x,\xi,r,t)$ satisfies
 \begin{equation}
\label{weak kinetic equation}
\begin{split}
&\int_0^T\int_{\R^{+}\times\R^6}f\left(\varphi_t+\xi\cdot\nabla_x\varphi-\frac{\u-\xi}{r^2}\cdot\nabla_{\xi}\varphi\right)\,dx\,d\xi\,d r+
\int_0^T\int_{\R^{+}\times\R^6}Q(f)\varphi\,dx\,d\xi\,dr
\\&+
\int_0^T\int_{\R^{+}\times\R^6}f^0\varphi(0,x,\xi,r)\,dx\,d\xi\,dr=0
\end{split}
\end{equation}
for any test function $\varphi(t,x,\xi,r).$

 Moreover, this non-negative weak solution satisfies
 the following estimates:
 \begin{equation}
 \label{estimate of f}
 \begin{split}
 &f\in L^{\infty}(0,T;L^1(\O\times\R^3\times\R^+)),
  \\&f\in L^{\infty}(0,T;L^{\infty}(\O\times\R^3\times\R^+)),
  \end{split}
  \end{equation}
 $$f\in C([0,T];W^{-1,p}(\O\times\R^3\times\R^+)),\quad\text{ for any } 1\leq p \leq \infty,$$
 $$supp(f)\subset \O\times\R^3\text{ for a.e. } t\in [0,T].$$
 \end{Proposition}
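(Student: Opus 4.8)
\emph{The plan is to} exploit that, once $\u$ is prescribed, \eqref{kinetic} is a \emph{linear} equation in $f$: a Vlasov-type transport equation in the $(x,\xi)$ variables, with $r$ entering only as a parameter in the transport and as the variable acted upon by the linear fragmentation operator $Q$. I would solve the transport part by the method of characteristics, handle the nonlocal operator $Q$ by a Duhamel/fixed-point iteration, and finally remove the regularity deficit of $\u$ by a stability argument. First I would recast \eqref{kinetic} in nonconservative form, using $\Dv_\xi\big(\frac{\u-\xi}{r^2}\big)=-\frac{3}{r^2}$, so that
\begin{equation*}
\partial_t f+\xi\cdot\na_x f+\frac{\u-\xi}{r^2}\cdot\na_\xi f=\frac{3}{r^2}f-\nu f+\nu G(f),
\end{equation*}
where $G(f)$ denotes the gain part of $Q$. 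Since $r$ ranges in a set bounded away from $0$ on the support of $f_0$, the amplification rate $3/r^2$ stays bounded, and the loss $-\nu f$ is absorbed by an integrating factor; this is the mechanism behind both the maximum principle and the estimates in \eqref{estimate of f}.

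To build a solution I would first mollify $\u$ in $x$, obtaining smooth fields $\u_\eta\to\u$ uniformly on $[0,T]\times\O$. For each fixed $\eta$ the characteristic system $\dot X=\Xi,\ \dot\Xi=(\u_\eta(t,X)-\Xi)/r^2$ possesses a globally defined, unique flow, because the field is Lipschitz in $(X,\Xi)$ and the force is linear in $\Xi$. I would then solve the full equation by Duhamel's formula along these characteristics, treating $\nu G(f)$ as a source. Thanks to the regularity and support of the kernel $B$ in \textbf{hypotheses A}, the gain operator $G$ is a bounded linear map in the $r$-variable, so a contraction/Picard argument yields a unique solution on a short interval, which is then continued to $[0,T]$ using the a priori bounds. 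The resulting representation makes positivity immediate, since the gain term is nonnegative whenever $f\ge 0$.

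The estimates in \eqref{estimate of f}, uniform in $\eta$, would follow from this representation. The $L^\infty$ bound comes from the pointwise relation above: along characteristics $f$ grows at a bounded rate, giving $\|f(t)\|_{L^\infty}\le e^{Ct}\|f_0\|_{L^\infty}$. Integrating the conservative form in $(x,\xi,r)$ and using that $\int B$ is controlled gives $\frac{d}{dt}\|f\|_{L^1}\le C\|f\|_{L^1}$, hence the $L^1$ bound, the $r^3$-weighted mass being exactly conserved by \eqref{conservation}. Compact support in $\xi$ is preserved because the friction $-\xi/r^2$ drives $\Xi$ toward the bounded field $\u_\eta$, so characteristics issued from $\supp f_0$ remain in a fixed ball in $\xi$ uniformly in $\eta$ (the $x$-support being the whole torus). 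Finally, bounding $\partial_t f$ in a negative Sobolev space directly from the equation yields the claimed continuity in $C([0,T];W^{-1,p})$.

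It remains to pass to the limit $\eta\to 0$ and to prove uniqueness, and this is where the real difficulty lies: $\u$ is only continuous, so the characteristic flow for the limiting field need not be unique and the classical argument breaks down. I would extract a weak-$*$ limit of the $f_\eta$ in $L^\infty$ from the uniform bounds and identify it as a weak solution of \eqref{kinetic} through the stability theory for linear transport with non-smooth coefficients, writing the equation in renormalized (DiPerna--Lions) form and exploiting that the relevant divergence $-3/r^2$ is smooth. The same renormalization delivers uniqueness: the difference of two solutions with identical data solves the homogeneous linear problem, and a Gr\"onwall estimate on its $L^1$ norm forces it to vanish. The principal obstacle is therefore not the fragmentation operator, which is linear and bounded, but reconciling the method of characteristics with the low regularity of $\u$; regularization combined with renormalized stability is the device that overcomes it, in the spirit of \cite{LV}.
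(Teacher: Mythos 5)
Your construction is, in its core, the same as the paper's: the paper quotes the existence result from Leger--Vasseur \cite{LV} and, in the proof of Proposition \ref{pro of f}, sketches exactly your scheme --- iterate on the fragmentation gain term by solving \eqref{kinetic approximation} (with $f_{k-1}$ in the gain) along the classical characteristics \eqref{ode}, which yields the Duhamel representation \eqref{kinetic solution} with the integrating factor $e^{-\int_0^t(\nu-\frac{3}{r^2})\,ds}$; this is precisely your nonconservative rewriting via $\Dv_{\xi}\bigl(\frac{\u-\xi}{r^2}\bigr)=-\frac{3}{r^2}$, and then one passes to the limit $k\to\infty$ by weak convergence. Positivity from the representation, the exponential $L^{\infty}$ and $L^1$ bounds, preservation of compact $\xi$-support by the friction term, and $C([0,T];W^{-1,p})$ continuity read off from the equation all match the intended argument.

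The genuine gap is in the layer you yourself flag as ``where the real difficulty lies.'' The DiPerna--Lions renormalization theory \cite{DL} that you invoke to identify the limit $f_\eta\to f$ and to prove uniqueness requires the phase-space field $(\xi,\frac{\u-\xi}{r^2})$ to lie in $W^{1,1}_{loc}$ (or BV, by Ambrosio's extension) in the transported variables; mere continuity of $\u$ in $x$ gives no Sobolev control, and the smoothness of the divergence $-\frac{3}{r^2}$ does not help, because the derivative of the field enters through the commutator estimate, not through the divergence. For fields that are only continuous (even H\"older), both the characteristic flow and weak solutions of the linear transport equation can fail to be unique, so under the literal hypothesis $\u\in C([0,T];C(\O))$ neither your identification of the weak-$*$ limit nor your Gr\"onwall uniqueness argument closes as written. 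The paper avoids this issue entirely: the proposition is cited from \cite{LV} and is only ever applied with Galerkin velocities $\u_{k-1}\in C([0,T];X_{k-1})$, which are smooth in $x$ (finite combinations of Laplacian eigenfunctions --- note the paper's own estimates use $\bigl\|\frac{\u_{k-1}^1}{r^2}\bigr\|_{W^{1,\infty}(\O)}$), so the characteristics are classical, uniqueness follows by Gr\"onwall along the flow, and your mollification/renormalization step is unnecessary. To repair your proof, either strengthen the hypothesis to something like $\u\in L^1(0,T;W^{1,\infty}(\O))$ (and observe that the application in the paper supplies this), or you would need a uniqueness mechanism for $C^0$ fields that does not exist in general.
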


 \bigskip

The next step is to secure that the weak solution $f_k(x,\xi,r,t)$ constructed in Proposition~\ref{proposition of kinetic equation} has if kinetic first moment $j_k(x,t) \in  L^{\infty}(0,T;L^2(\O)).$

Such estimates are a result of the following proposition, whose proof follows immediately.
\begin{Proposition}\label{pro of f}
If $ \u_k\in C([0,T];X_{k})$, then there exist operators $n_k=N(\u_k),j= L(\u_k):C([0,T];X_{k})\rightarrow C([0,T];C(\O))$ satisfying
%(we suppress the sub-index $k$ for notational simplicity,)

\

\begin{itemize}

  \item[{\bf i)}] (Lipschitz estimate for the kinetic density) \begin{equation*}
\|n_k^1-n_k^2\|_{L^{\infty}(0,T;L^{\infty}(\O)}\leq C(a,b,T)\|\u_k^1-\u_k^2\|_{L^{2}(0,T;L^2(\O))},
\end{equation*}.

\

 \item[{\bf ii)}] (Lipschitz estimate for the mean velocity) \begin{equation*}
\|j_k^1-j_k^2\|_{L^{\infty}(0,T;L^{\infty}(\O)}\leq C(a,b,T)\|\u_k^1-\u_k^2\|_{L^{2}(0,T;L^2(\O))},
\end{equation*} \\ for any $\u_k^{1},\u_k^{2}$ in the following set $$M_{L}=\{\u_k\in C([0,T];X_{k});\|u\|_{C([0,T];X_{k})}\leq L,\ t\in\ [0,T]\}.$$
\end{itemize}
\end{Proposition}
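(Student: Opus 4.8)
The plan is to prove the two Lipschitz estimates in Proposition~\ref{pro of f} by studying the difference of two kinetic solutions $f_k^1,f_k^2$ corresponding to velocity fields $\u_k^1,\u_k^2$, and then extracting the moments $n_k^i,j_k^i$. First I would set $g=f_k^1-f_k^2$, which by linearity of the transport--Boltzmann operator satisfies
\begin{equation*}
g_t+\xi\cdot\nabla_x g+\Dv_\xi\!\left(\frac{\u_k^1-\xi}{r^2}g\right)=Q(g)-\Dv_\xi\!\left(\frac{\u_k^1-\u_k^2}{r^2}f_k^2\right),
\end{equation*}
with $g(0)=0$. The point is that $g$ solves the \emph{same} kinetic equation as before, driven by a source term proportional to $\u_k^1-\u_k^2$ and to $f_k^2$; the difference of the drift fields has been collected into that source. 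Because on $X_k$ the velocities are finite linear combinations of smooth eigenfunctions $e_i$, the bound $\|\u_k\|_{C([0,T];X_k)}\le L$ controls $\u_k$ and all its $x$-derivatives uniformly in $C(\O)$, so the transport coefficients are smooth and the characteristics method of Proposition~\ref{proposition of kinetic equation} applies verbatim.

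The key steps, in order, are as follows. First I would record the a priori bounds on $f_k^2$ from Proposition~\ref{proposition of kinetic equation}: $f_k^2$ is bounded in $L^\infty(0,T;L^1\cap L^\infty)$ and remains compactly supported in $\xi$ for a.e.\ $t$, with the support controlled by $L$ through the flow (since $|\dot\xi|\le (|\u_k^2|+|\xi|)/a^2$ along characteristics, the $\xi$-support grows at a controlled rate over $[0,T]$). Second, I would represent $g$ along the characteristics of the field $(\xi,(\u_k^1-\xi)/r^2)$ via Duhamel's formula, so that $g(t)$ is the time-integral of the source pushed forward by the flow plus the action of $Q$. Third, I would estimate the source: since $r\in[a,b]$ with $a>0$, the factor $1/r^2\le 1/a^2$ is bounded, and $\Dv_\xi((\u_k^1-\u_k^2)f_k^2)$ is $|\u_k^1-\u_k^2|$ times the $\xi$-gradient of $f_k^2$. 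Integrating $g$ against $r^3$ (resp.\ $r^3\xi$) over $\xi$ and $r$ and using the compact $\xi$-support of $f_k^2$ to integrate by parts removes the $\nabla_\xi$ and leaves $\|\u_k^1-\u_k^2\|$ multiplied by moments of $f_k^2$ that are already bounded by $L$ and the data. Fourth, after forming the moments $n_k^1-n_k^2=\int_a^b\int_{\R^3} r\,g\,d\xi\,dr$ and $j_k^1-j_k^2=\int_a^b\int_{\R^3} r\xi\,g\,d\xi\,dr$, a Gr\"onwall argument in $t$ (the $Q$-term and the drift contribute a linear-in-$g$ feedback with constant depending on $\nu$, $a$, $b$, $L$) closes the estimate and produces the constant $C(a,b,T)$, with the $L^2$ norm of $\u_k^1-\u_k^2$ in time arising because the source is integrated over $[0,T]$ and then controlled by Cauchy--Schwarz.

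The main obstacle I expect is handling the velocity-divergence source term $\Dv_\xi((\u_k^1-\u_k^2)f_k^2/r^2)$ cleanly at the level of moments: the derivative falls on $f_k^2$, so one must either integrate by parts in $\xi$ (legitimate thanks to the compact $\xi$-support guaranteed by Proposition~\ref{proposition of kinetic equation}) to transfer it onto the test weight $r^3$ or $r^3\xi$, or else control $\|\nabla_\xi f_k^2\|$ directly, which the cited existence theorem does not obviously provide. I would favor the integration-by-parts route: testing the equation for $g$ against $r^3$ and $r^3\xi$ turns the problematic term into $\int r^3 (\u_k^1-\u_k^2)f_k^2/r^2\,d\xi\,dr$ and $\int r^3\xi\cdot\nabla_\xi$-type contractions that produce bounded moments of $f_k^2$, so no derivative of $f_k^2$ survives. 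A secondary technical point is that the claimed estimates are stated in $L^\infty(0,T;L^\infty(\O))$ for the moments, which requires pointwise-in-$x$ control; this follows because along characteristics the source is evaluated at the transported point and the $L^\infty_x$ bound on $\u_k^1-\u_k^2$ (available on the finite-dimensional space $X_k$, where $\|\cdot\|_{C([0,T];X_k)}$ dominates $\|\cdot\|_{L^\infty_x}$) together with the $L^\infty$ bound on $f_k^2$ gives the sup-norm estimate, with the $L^2$-in-time norm of $\u_k^1-\u_k^2$ on the right coming from the time integration as noted above.
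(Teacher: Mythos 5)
There is a genuine gap, and it sits exactly at the point you flagged as the ``main obstacle.'' Your preferred route --- test the equation for $g=f_k^1-f_k^2$ against $r^3$ and $r^3\xi$ and integrate by parts in $\xi$ --- does kill the problematic source $\Dv_\xi\bigl((\u_k^1-\u_k^2)f_k^2/r^2\bigr)$ without touching $\nabla_\xi f_k^2$, but it cannot produce the claimed $L^{\infty}(0,T;L^{\infty}(\O))$ bounds, because the moment hierarchy does not close pointwise in $x$: integrating the equation only in $(\xi,r)$ leaves the free transport term as $\Dv_x\!\int r^3\xi\, g\,d\xi\,dr$, i.e.\ the zeroth moment of $g$ couples to its first moment, the first to the second, and so on. There is no pointwise-in-$x$ Gr\"onwall inequality to run; integrating additionally in $x$ would at best give $L^1_x$-type control, not the sup-norm estimates of the Proposition. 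Your fallback for the $L^\infty_x$ claim (``along characteristics the source is evaluated at the transported point'') reverts to the Duhamel representation of $g$, whose source is $\frac{\u_k^1-\u_k^2}{r^2}\cdot\nabla_\xi f_k^2$ --- precisely the quantity you correctly observe the existence theory does not control. So the two halves of your argument each repair the other's defect but cannot be combined: the moment route loses pointwise $x$-control, and the characteristics route for $g$ needs $\nabla_\xi f_k^2$.

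The paper's proof avoids subtracting the equations altogether. It subtracts the explicit solution \emph{formulas} (the characteristics representation \eqref{kinetic solution} of each $f_k^i$) and estimates the difference of the two characteristic flows: writing $Y_i$ for the flow of \eqref{ode} driven by $\u_{k-1}^i$, an ODE stability estimate plus Gr\"onwall gives
\begin{equation*}
\|Y_1-Y_2\|_{L^{\infty}(0,T;L^{\infty}(\O\times\R^3\times\R^+))}\leq C(a,b,T)\int_0^t\|\u_{k-1}^1-\u_{k-1}^2\|_{L^2(\O)}\,ds,
\end{equation*}
where the derivative falls on the smooth finite-dimensional field $\u_{k-1}^1$ (its $W^{1,\infty}$ norm is controlled on $M_L$ by norm equivalence on $X_k$, which is also how the $L^\infty_x$ difference is traded for the $L^2_x$ norm on the right), and the velocity difference enters through the trajectories rather than through a $\xi$-derivative of the solution. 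Plugging this into the difference of representation formulas and running Gr\"onwall once more yields $\|f_k^1-f_k^2\|_{L^\infty}\leq C(a,b,T)\|\u_{k-1}^1-\u_{k-1}^2\|_{L^2(0,T;L^2(\O))}$, and the moment estimates \textbf{i)}--\textbf{ii)} then follow trivially by integrating this sup-norm bound over the compact $(\xi,r)$-support of $f$. (Both the paper's argument and yours implicitly require some modulus of continuity on $f_0$ when the flow difference hits the initial datum, but the structural point --- transfer the velocity perturbation to the characteristics, not to a derivative of $f_k^2$ --- is what your proposal is missing.) If you want to salvage your scheme, estimate $\|g\|_{L^\infty_{x,\xi,r}}$ via flow stability first, and only then form the moments.
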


\begin{proof}
Following the strategy of Leger-Vasseur \cite{LV}, we construct a sequence of solutions verifying

%\label{kinetic approximation}
\begin{equation}
\begin{cases}
\label{kinetic approximation}
&\partial_t f_k+\xi\cdot\nabla f_k+\Dv_{\xi}\left(\frac{\u_{k-1}-\xi}{r^2}f_n\right)=-\nu f_k(x,\xi,r,t)
\\&\quad\quad\quad\quad\quad\quad\quad\quad+\nu\int_{r>r^*}B(r^*,r)f_{k-1}(x,\xi,r^*,t)\,d r^*,\\
& f_k(x,\xi,r,0)=f_0(x,\xi,r).
\end{cases}
\end{equation}
as follows. First, we need to write the following ODEs:
\begin{equation}
\begin{cases}
\label{ode}
%\begin{split}
&\frac{dx}{dt}=\xi;\\
&\frac{d\xi}{dt}=\frac{\u_{k-1}-\xi}{r^2};\\
&x(0)=x;
\\&\xi(0)=\xi,
\end{cases}
\end{equation}
  then, by the characteristic method, we have the following solution to \eqref{kinetic approximation}
\begin{equation}
\label{kinetic solution}
\begin{split}
&f_k(t,x,\xi,r)=e^{-\int_0^t(\nu -\frac{3}{r^2})d\,s}f_0(x(0,t,x,\xi),\xi(0,t,\tau),r)
\\&+\nu\int_0^t\int_{\R^+}e^{-\int_0^t(\nu -\frac{3}{r^2})d\,s}B(r,r^*)f_{k-1}(\tau,x(\tau,t,x,\xi),r^*)d r^*\,d\tau.
\end{split}
\end{equation}
So taking the limits as $k\to\infty$, one obtains the weak solutions to \eqref{kinetic} by the standard argument of weak convergence in Leger-Vasseur \cite{LV}. However, we need to use \eqref{kinetic solution} to
derive some new estimates due to the compressible fluids and the coupling to the kinetic equations.  Let $f_k^1$ and $f_k^2$ be two solutions to  \eqref{kinetic approximation} corresponding to $\u_{k-1}^1$ and $\u_{k-1}^2$ respectively, and
$f^1$ and $f^2$ be two weak solutions to  \eqref{kinetic} corresponding to $\u^1$ and $\u^2$ respectively.
 Letting $Y(t,x,\xi)=(x,\xi)$, we have
\begin{equation}
\label{difference of f n}
\begin{split}
&\|f_k^1-f_k^2\|_{L^{\infty}(0,T;L^{\infty}(\O\times\R^3\times\R^+))}
\\&\leq C(T)\|Y_1-Y_2\|_{L^{\infty}(0,T;L^{\infty}(\O\times\R^3\times\R^+))}+C(T)\int_0^t\|f_k^1-f_k^2\|_{L^{\infty}(0,T;L^{\infty}(\O\times\R^3\times\R^+))}\,ds.
\end{split}
\end{equation}
As Leger-Vasseur \cite{LV},
  \begin{equation}
  \label{convergence of density function}
  f_k\to f\text{   in }L^{p}(0,T;L^{p}(\O\times\R^3\times\R^+))\;\;\text{ and } f\in L^{\infty}(0,T;L^{\infty}(\O\times\R^3\times\R^+)).
  \end{equation}
  Letting $k\to\infty$ in \eqref{difference of f n}, yields
\begin{equation}
\label{difference of f}
\begin{split}
&\|f^1-f^2\|_{L^{\infty}(0,T;L^{\infty}(\O\times\R^3\times\R^+))}
\\&\leq C(T)\|Y_1-Y_2\|_{L^{\infty}(0,T;L^{\infty}(\O\times\R^3\times\R^+))}+C(T)\int_0^t\|f^1-f^2\|_{L^{\infty}(0,T;L^{\infty}(\O\times\R^3\times\R^+))}\,ds.
\end{split}
\end{equation}
However, in our case we need to control the characteristic ODE's of the transport flow depending on $\u_k(x,t)$, that we estimate as follows.

The first term above, after using \eqref{ode} with $u_{k-1}$, can be estimated by
\begin{equation*}
\begin{split}
&\|Y_1-Y_2\|_{L^{\infty}(0,T;L^{\infty}(\O\times\R^3\times\R^+))}
\\&\leq C \left(\int_0^t\|\frac{\u_{k-1}^1-\u_{k-1}^2}{r^2}\|_{L^{\infty}(\O)}\,ds+\int_0^t(1+\|\frac{\u_{k-1}^1}{r^2}\|_{W^{1,\infty}(\O)})\|Y_1-Y_2\|_{L^{\infty}(\O\times\R^3\times\R^+)}\,ds\right),
\end{split}\end{equation*}
and so by Gronwall inequality, we obtain
\begin{equation}
\label{difference of Y1 Y2}
\|Y_1-Y_2\|_{L^{\infty}(0,T;L^{\infty}(\O\times\R^3\times\R^+))}\leq C(a,b,T)\int_0^t\|\u_{k-1}^1-\u_{k-1}^2\|_{L^2(\O)}\,ds.
\end{equation}
In addition, by \eqref{difference of f} and \eqref{difference of Y1 Y2},
\begin{equation}
\label{key estimate of difference f}
\|f_k^1-f_k^2\|_{L^{\infty}(0,T;L^{\infty}(\O\times\R^3\times\R^+))}\leq C(a,b,T)\|\u_{k-1}^1-\u_{k-1}^2\|_{L^2(0,T;\O)}.
\end{equation}
Let $\mathfrak{n}_k=N(\u_{k-1})$ and $j_k=L(\u_{k-1})$,  from \eqref{key estimate of difference f}, one obtains the following two estimates
\begin{equation}
\label{continuity of N(u)}
\|\mathfrak{n}_k^1 -\mathfrak{n}_k^2\|_{L^{\infty}(0,T;L^{\infty}(\O))}=
\|N(\u_{k-k}^1)-N(\u_{k-1}^2)\|_{L^{\infty}(0,T;L^{\infty}(\O))}\leq C(a,b,T)\|\u_{k-1}^1-\u_{k-1}^2\|_{L^{2}(0,T;L^2(\O))},
\end{equation}
and
\begin{equation}
\label{continuity of L(u)}
\|j_k^1-j_k^2\|=\|L(\u_{k-1}^1)-L(\u_{k-1}^2)\|_{L^{\infty}(0,T;L^{\infty}(\O))}\leq C(a,b,T)\|\u_{k-1}^1-\u_{k-1}^2\|_{L^{2}(0,T;L^2(\O))}.
\end{equation}
The proof of Proposition \ref{pro of f} is completed, and we have all needed estimates to complete {\bf Part 1.} of the iteration needed to construct the solutions stated in Proposition~\ref{k-approx-problem}

\end{proof}

For {\bf Part 2.} of the iteration, it is natural to obtain an energy identity for the kinetic part  \eqref{NSVB3} of  $k,\varepsilon$-approximate compressible fluid kinetic system. The following proposition yields such identity.

\begin{Proposition}\label{energy equality for kinetic part} [Kinetic energy conservation]
If $ \u\in C([0,T];X_{k})$,  any weak solution $f$ of \eqref{kinetic} satisfies the following identity:
\begin{equation*}
\begin{split}&
\int_{\O}\int_a^b\int_{\R^3}r^3(1+|\xi|^2)f\,d\xi\,dr\,dx-\int_{\O}\int_a^b\int_{\R^3}r^3(1+|\xi|^2)f_0\,d\xi\,dr\,dx
\\&\quad\quad\quad\quad\quad\quad\quad\quad=2\int_0^t\int_{\O}\int_a^b\int_{\R^3}r(\u_{k-1}-\xi)f\xi\,d\xi\,dr\,dx\,dt.
\end{split}
\end{equation*}
\end{Proposition}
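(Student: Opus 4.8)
The plan is to test the kinetic equation \eqref{kinetic} against the weight $r^3(1+|\xi|^2)$ and read off the identity term by term. First I would multiply \eqref{kinetic} by $r^3(1+|\xi|^2)$ and integrate over $\O\times\R^3\times[a,b]$. The time-derivative term produces $\frac{d}{dt}\int_{\O}\int_a^b\int_{\R^3}r^3(1+|\xi|^2)f\,d\xi\,dr\,dx$, so that integrating in time from $0$ to $t$ will yield precisely the two boundary terms on the left-hand side of the claimed identity.

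Next I would dispose of the two transport contributions. The free-streaming term $\xi\cdot\nabla_x f$ integrates to zero after an integration by parts in $x$: the weight $r^3(1+|\xi|^2)\xi$ is independent of $x$, and the periodic boundary conditions on $\O=\mathbb{T}^3$ generate no boundary term. For the Vlasov term $\Dv_\xi\!\big(\frac{(\u_{k-1}-\xi)f}{r^2}\big)$ I would integrate by parts in $\xi$; since $\nabla_\xi[r^3(1+|\xi|^2)]=2r^3\xi$, the result is $-\int 2r^3\xi\cdot\frac{(\u_{k-1}-\xi)f}{r^2}\,d\xi\,dr\,dx=-2\int r\,\xi\cdot(\u_{k-1}-\xi)f\,d\xi\,dr\,dx$, and transferring this to the right-hand side reproduces exactly the drag term $2\int_0^t\int_{\O}\int_a^b\int_{\R^3}r(\u_{k-1}-\xi)f\,\xi$. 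Note that the constant ``$1$'' in the weight carries no $\xi$-gradient, so it contributes nothing to this term, consistent with the fact that the drag integral has no ``$1$''-part.

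It then remains to show that the collision contribution $\int_{\O}\int_a^b\int_{\R^3}r^3(1+|\xi|^2)Q(f)$ vanishes. Splitting it as $\int r^3 Q(f)+\int r^3|\xi|^2 Q(f)$, the second integral is zero by Lemma~\ref{Lemma of operator} with $p=2$, while the first vanishes by the identical symmetrization argument applied with the trivial weight $|\xi|^0=1$ (equivalently, it is the $Q$-part of the conservation law \eqref{conservation}). Collecting the four computations and integrating in time from $0$ to $t$ then gives exactly the stated balance.

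The main difficulty is rigor rather than the formal algebra, since $r^3(1+|\xi|^2)$ is not an admissible test function in the weak formulation \eqref{weak kinetic equation}: it grows at infinity in $\xi$. The cleanest way to legitimize every integration by parts is to exploit that the weak solution $f$ built in Proposition~\ref{proposition of kinetic equation} has compact support in $\xi$ for each $t\in[0,T]$. Indeed, the characteristic representation \eqref{kinetic solution}, together with the velocity ODE $\dot\xi=(\u_{k-1}-\xi)/r^2$ from \eqref{ode} and the boundedness of $\u_{k-1}\in C([0,T];X_k)$ and of $r\in[a,b]$, confines the characteristics issuing from $\supp f_0$ to a bounded set on $[0,T]$; hence $r^3(1+|\xi|^2)$ is effectively a compactly supported weight on $\supp f$, and all moments are finite. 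Should one instead wish to argue directly with a general weak solution, I would insert a smooth cutoff $\chi_R(\xi)$, equal to $1$ for $|\xi|\le R$ and supported in $|\xi|\le 2R$, use $r^3(1+|\xi|^2)\chi_R(\xi)$ as a genuine test function, and let $R\to\infty$; the commutator generated by $\nabla_\xi\chi_R$ is supported on $|\xi|\sim R$ with $|\nabla_\xi\chi_R|\lesssim R^{-1}$, and is therefore controlled by the tail of the $r^3|\xi|^2$ moment furnished by the energy identity \eqref{energy inequality}, which vanishes as $R\to\infty$.
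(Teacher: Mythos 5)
Your proof is correct, but it takes a genuinely different route from the paper's. The paper never tests the limit equation with the weight $r^3(1+|\xi|^2)$ directly: it multiplies the \emph{iterative} approximation \eqref{kinetic approximation} by this weight, where the loss term involves $f_k$ while the gain term involves $f_{k-1}$, so the two collision contributions do not cancel at any fixed $k$; the resulting identity \eqref{AAAA} carries both $\nu$-terms explicitly, and the cancellation is achieved only after letting $k\to\infty$, invoking the strong convergence \eqref{convergence of density function}, Fubini's theorem, and the symmetrization property $\int_{r^*>r}r^3B(r^*,r)\,dr=(r^*)^3$. You instead work with the weak solution $f$ itself and dispose of the collision term by Lemma \ref{Lemma of operator} with $p=2$ together with its $p=0$ analogue (the $r^3$-mass conservation \eqref{conservation}); this is legitimate, since the symmetrization in the lemma's proof does not use the exponent $p$ at all, only the $r^3$ weight. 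More importantly, you confront head-on an issue the paper glosses over: $r^3(1+|\xi|^2)$ is not an admissible test function in \eqref{weak kinetic equation}, and you justify the computation either via the compact support in $\xi$ propagated along the characteristics \eqref{ode} (consistent with the support assertion in Proposition \ref{proposition of kinetic equation} and the compactly supported data \eqref{regularized initial data}), or via a cutoff $\chi_R$ whose commutator is controlled by the second-moment bound. Your route is shorter and more self-contained at the level of the limit equation, and your truncation/support discussion supplies rigor that the paper's one-line limit passage leaves implicit; the paper's detour through the iteration has the complementary advantage that at each stage $f_k$ is given by the explicit representation \eqref{kinetic solution}, so no unbounded weight is ever applied to a merely weak solution, and the $k\to\infty$ passage reuses convergences needed elsewhere in the construction.
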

\begin{proof}
Using $1+|\xi|^2$ to multiply on both sides of \eqref{kinetic approximation}, and taking integration by parts, we have
\begin{equation}
\begin{split}
\label{AAAA}
&\int_{\O}\int_a^b\int_{\R^3}r^3(1+|\xi|^2)f_k\,d\xi\,dr\,dx-\int_{\O}\int_a^b\int_{\R^3}r^3(1+|\xi|^2)f^0_k\,d\xi\,dr\,dx
\\&=2\int_0^t\int_{\O}\int_a^b\int_{\R^3}r(\u_{k-1}-\xi)f_k\xi\,d\xi\,dr\,dx\,dt
\\&-\nu\int_0^t\int_{\O}\int_a^b\int_{\R^3}r^3(1+|\xi|^2)f_k\xi\,d\xi\,dr\,dx\,dt
\\&+\nu\int_0^t\int_{\O}\int_a^b\int_{\R^3}\int_{r>r^*}r^3(1+|\xi|^2)f_{k-1}(x,\xi,r^*,t)\,dr^*\xi\,d\xi\,dr\,dx\,dt.
\end{split}
\end{equation}
Letting $k\to\infty$ in \eqref{AAAA},
thanks to \eqref{convergence of density function} the Fubini's theorem, the conclusion can be followed.
\end{proof}
\bigskip

\begin{Lemma}
\label{Lemma of moment}Let $\u\in L^r(0,T;L^{N+p}(\O))$ be fixed with any $1\leq r\leq \infty$ and $p\geq 1$. Assume that $f_0\in L^{\infty}(\O\times\R^3\times\R^+)\cap L^1(\O\times\R^3\times\R^+),$ $r^3|\xi|^pf_0\in L^1(\O\times\R^3\times\R^+)$, then the solution $f(x,\xi,r,t)$ of \eqref{kinetic} has the following estimate
\begin{equation}
\begin{split}
\label{crucial estimate}
&\int_a^b\int_{\O}\int_{\R^3}r^3|\xi|^p f \,d\xi\; dx\,dr
\\&\leq pC_{T,N,b}\left(\left(\int_a^b\int_{\O}\int_{\R^3}r^3|\xi|^p f_0 \,d\xi\,dx\,dr\right)^{\frac{1}{N+p}}+(\|f_0\|_{L^{\infty}(\O\times\R^3\times\R^+)}+1)\|\u_{k-1}\|_{L^r(0,T;^{N+p}(\O))}\right)^{N+p},
\end{split}
\end{equation}
for any $0\leq t\leq T.$
\end{Lemma}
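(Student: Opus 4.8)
The plan is to test the kinetic equation \eqref{kinetic} against the weight $r^3|\xi|^p$ and follow the evolution of the moment
$$M_p(t)=\int_a^b\int_{\O}\int_{\R^3}r^3|\xi|^p f\,d\xi\,dx\,dr.$$
Multiplying \eqref{kinetic} by $r^3|\xi|^p$ and integrating over $\O\times\R^3\times[a,b]$, the free-transport term $\xi\cd\na_x f$ integrates to zero on the periodic domain (the $x$-integral of a gradient vanishes), and the collision contribution $\int r^3|\xi|^pQ(f)$ vanishes by Lemma \ref{Lemma of operator}. Integrating the drag term by parts in $\xi$ and using $\na_\xi|\xi|^p=p|\xi|^{p-2}\xi$ together with $r^3/r^2=r$, I obtain
$$\frac{d}{dt}M_p=p\int_a^b\int_{\O}\int_{\R^3}r|\xi|^{p-2}(\xi\cd\u)f\,d\xi\,dx\,dr-p\int_a^b\int_{\O}\int_{\R^3}r|\xi|^p f\,d\xi\,dx\,dr.$$
The last term is dissipative (the integral is nonnegative, so it enters with a favorable sign) and is simply discarded, while the first is controlled via $\xi\cd\u\le|\xi|\,|\u|$. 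At this formal level the only issue is finiteness of the moments and legitimacy of the $\xi$-integration by parts; both are secured by the compact $\xi$-support propagated by Proposition \ref{proposition of kinetic equation}, which I would make rigorous by running the computation first on the iterates $f_k$ of \eqref{kinetic approximation} through the explicit representation \eqref{kinetic solution} and then passing to the limit.

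The heart of the argument is a velocity-moment interpolation used to estimate the local density $\int r|\xi|^{p-1}f\,d\xi\,dr$ by the local $p$-moment. Fixing $x$ and setting $h_x(\xi)=\int_a^b r f(x,\xi,r)\,dr$, one has $\|h_x\|_{L^\infty_\xi}\le C(a,b)\|f\|_{L^\infty}$, and since $r\le a^{-2}r^3$ we have $\int_{\R^3}|\xi|^p h_x\,d\xi\le a^{-2}m_p(x)$ with $m_p(x)=\int_a^b\int_{\R^3}r^3|\xi|^p f\,d\xi\,dr$. The standard splitting $\int_{|\xi|<R}+\int_{|\xi|>R}$ optimized over $R$, where $N=3$ is the velocity-space dimension, yields
$$\int_{\R^3}|\xi|^{p-1}h_x\,d\xi\le C\|h_x\|_{L^\infty_\xi}^{\frac{1}{N+p}}\left(\int_{\R^3}|\xi|^p h_x\,d\xi\right)^{\frac{N+p-1}{N+p}},$$
hence $\int_a^b\int_{\R^3}r|\xi|^{p-1}f\,d\xi\,dr\le C(a,b)(\|f\|_{L^\infty}+1)^{\frac{1}{N+p}}m_p(x)^{\frac{N+p-1}{N+p}}$.

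Inserting this into the moment balance and applying Hölder in $x$ with conjugate exponents $N+p$ and $\frac{N+p}{N+p-1}$ (so that $m_p^{(N+p-1)/(N+p)}$ pairs against $\u\in L^{N+p}(\O)$ and reassembles $M_p^{(N+p-1)/(N+p)}$) gives the differential inequality
$$\frac{d}{dt}M_p\le pC(a,b)(\|f\|_{L^\infty}+1)^{\frac{1}{N+p}}\|\u\|_{L^{N+p}(\O)}\,M_p^{\frac{N+p-1}{N+p}}.$$
I would then integrate this by the Bihari-type trick: rewriting it as $\frac{d}{dt}M_p^{\frac{1}{N+p}}\le C(\|f\|_{L^\infty}+1)^{\frac{1}{N+p}}\|\u\|_{L^{N+p}(\O)}$ linearizes the right side, and integrating in time, using Hölder in time to produce $\|\u\|_{L^r(0,T;L^{N+p}(\O))}$ (the factor $T^{1-1/r}$ absorbed into the constant) together with $\|f\|_{L^\infty}\le\|f_0\|_{L^\infty}$, one reaches
$$M_p(t)^{\frac{1}{N+p}}\le M_p(0)^{\frac{1}{N+p}}+C_{T,N,b}(\|f_0\|_{L^\infty}+1)\|\u\|_{L^r(0,T;L^{N+p}(\O))}.$$
Raising to the power $N+p$ and using $(\|f_0\|_{L^\infty}+1)^{1/(N+p)}\le\|f_0\|_{L^\infty}+1$ yields \eqref{crucial estimate}, the outer factor $p$ being harmless slack. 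The genuinely delicate point is the interpolation step and the correct bookkeeping of the radius weight through $a\le r\le b$; the remainder is a routine moment identity, a Hölder pairing, and the nonlinear ODE integration.
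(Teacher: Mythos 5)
Your proposal is correct and follows essentially the same path as the paper's own proof: multiply the approximating equation \eqref{kinetic approximation} by $r^3|\xi|^p$ and pass $k\to\infty$, kill the collision moment by the symmetrization identity, split the $\xi$-integral at a radius $R$ optimized pointwise in $x$ to interpolate $\int_a^b\int_{\R^3} r|\xi|^{p-1}f\,d\xi\,dr$ against the local $p$-moment, apply H\"older with exponent $s=N+p$, and close with the Bihari-type integration of $M_p^{1/(N+p)}$. The only inaccuracy is your claim $\|f\|_{L^{\infty}}\leq\|f_0\|_{L^{\infty}}$, which fails here because the $\xi$-divergence of the drag field contributes a factor $e^{3t/r^2}$ and the breakup gain term adds mass along characteristics, as the representation \eqref{kinetic solution} shows; what is true, and all that is needed since the constant $C_{T,N,b}$ may depend on $T$, $a$, $b$, is the bound $\|f(t)\|_{L^{\infty}}\leq C(T,a,\nu)\|f_0\|_{L^{\infty}}$ furnished by Proposition \ref{proposition of kinetic equation}.
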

\begin{proof}
For any $p\geq 1$, multiplying $r^3|\xi|^p$ on both sides of kinetic equation \eqref{kinetic approximation}, we have
\begin{equation}
\begin{split}
\label{BBBBA}
&\int_a^b\int_{\O}\int_{\R^3}r^3|\xi|^p f_k \,d\xi\; dx\,dr-\int_a^b\int_{\O}\int_{\R^3}r^3|\xi|^p f^0_k \,d\xi\; dx\,dr
\\&=p\int_0^t\int_a^b\int_{\O}\int_{\R^3}r(\u_{k-1}-\xi)f_k|\xi|^{p-1}\cdot\frac{\xi}{|\xi|} \,d\xi\; dx\,dr\,dt
\\&-\nu\int_0^t\int_{\O}\int_a^b\int_{\R^3}r^3|\xi|^p f_k\xi\,d\xi\,dr\,dx\,dt
\\&+\nu\int_0^t\int_{\O}\int_a^b\int_{\R^3}\int_{r>r^*}r^3|\xi|^pf_{k-1}(x,\xi,r^*,t)\,dr^*\xi\,d\xi\,dr\,dx\,dt.
\end{split}
\end{equation}
Letting $k\to\infty$ in \eqref{BBBBA},
\eqref{convergence of density function} and the Fubini's theorem gives us
\begin{equation}
\label{integral-differential inequality}
\begin{split}
&\int_a^b\int_{\O}\int_{\R^3}r^3|\xi|^p f_k \,d\xi\; dx\,dr-\int_a^b\int_{\O}\int_{\R^3}r^3|\xi|^p f^0_k \,d\xi\; dx\,dr
\\&
+p\int_0^t\int_a^b\int_{\O}\int_{\R^3}r|\xi|^pf \,d\xi\; dx\,dr\,dt=\int_0^TI(t)\,dt,
\end{split}
\end{equation}
where $$I(t)=p\int_a^b\int_{\O}\int_{\R^3}r|\xi|^{p-1}f\u_{k-1} \cdot\frac{\xi}{|\xi|} \,d\xi\; dx\,dr.$$

Thanks to H\"{o}lder inequality, we can control $I(t)$ as follows
\begin{equation}
\label{estimate on I(t)}
I(t)\leq p\|\u_{k-1}\|_{L^s(\O)}\left(\int_{\O}\left(\int_a^b\int_{\R^3}r|\xi|^{p-1}f\,d\xi\,dr\right)^{s'}dx\right)^{\frac{1}{s'}},
\end{equation}
where $$\frac{1}{s}+\frac{1}{s'}\leq 1.$$
For any $R>0$, we have
\begin{equation}
\begin{split}
\label{estimate on I(t)-2}
\int_a^b\int_{\R^3}&r|\xi|^{p-1}f\,d\xi\,dr=
\int_a^b\int_{|\xi|\leq R}r|\xi|^{p-1}f\,d\xi\,dr+\int_a^b\int_{|\xi|\geq R}r|\xi|^{p-1}f\,d\xi\,dr
\\&\leq b^2\|f\|_{L^{\infty}(\O\times\R^3\times\R^+)}\frac{R^{N+p-1}}{N+p-1}+\frac{1}{a^2R}\int_a^b\int_{|\xi|\geq R}r^3|\xi|^p f\,d\xi\,dr.
\end{split}
\end{equation}
Taking $s=N+p$ in \eqref{estimate on I(t)}, and
 $$R=\left(\int_a^b\int_{\R^3} r^3|\xi|^p f\,d\xi\,dr\right)^{\frac{1}{N+p}}>0$$
in \eqref{estimate on I(t)-2},
one obtains that
\begin{equation}
\label{bound of I(t)}
I(t)\leq p\|\u_{k-1}\|_{L^{N+p}}(\frac{1}{a^2}+\frac{b^2}{N+p-1}\|f\|_{L^{\infty}(\O\times\R^3\times\R^+)})\left(\int_a^b\int_{\R^3} r^3|\xi|^p f\,d\xi\,dr\right)^{\frac{N+p-1}{N+p}}.
\end{equation}

Thanks to \eqref{integral-differential inequality} and \eqref{bound of I(t)}, we deduce
\begin{equation*}
\begin{split}
&\int_a^b\int_{\O}\int_{\R^3}r^3|\xi|^p f \,d\xi\; dx\,dr
\\&\leq pC_{T,N,a,b}\left(\left(\int_a^b\int_{\O}\int_{\R^3}r^3|\xi|^3 f_0 \,d\xi\,dx\,\,dr\right)^{\frac{1}{N+p}}+(\|f_0\|_{L^{\infty}(\O\times\R^3\times\R^+)}+1)\|\u_{k-1}\|_{L^r(0,T;L^{N+p}(\O))}\right)^{N+p}
\end{split}
\end{equation*}
for any $0\leq t\leq T.$
\end{proof}

\bigskip
We are now in conditions  to obtain estimates for the zero moment \eqref{zero moment} and first moment \eqref{first moment} of the solutions of the Vlasov-Boltzman equations \eqref{kinetic}.
 We estimate these quantities  in the following lemma \ref{Lemma of n-j}  that may be similar to the variation of the classical regularity of moments, see \cite{LP}. The proof closely follows the argument as in \cite{H}.
\begin{Lemma}
\label{Lemma of n-j} Under the hypothesis of Lemma \ref{Lemma of moment},
for any $p\geq 1$, $0\leq t\leq T$, we have

\begin{equation}\label{moment0}
\|\mathfrak{n}_k\|_{L^{\frac{N+p}{N}}(\O)} \leq C_{N,b,T}(\|f_k\|_{L^{\infty}(\O\times\R^3\times\R^+)}+1)\left(\int_a^b\int_{\O}\int_{\R^3}r^3|\xi|^pf_k\,d\xi\,dx\,dr\right)^{\frac{N}{N+p}},
\end{equation}
and
\begin{equation}\label{moment1} \|j_k\|_{L^{\frac{N+p}{N+1}}(\O)} \leq C_{N,b,T}(\|f_k\|_{L^{\infty}(\O\times\R^3\times\R^+)}+1)\left(\int_a^b\int_{\O}\int_{\R^3}r^3|\xi|^pf_k\,d\xi\,dx\,dr\right)^{\frac{N+1}{N+p}}.
\end{equation}
\end{Lemma}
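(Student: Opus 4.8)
The plan is to prove both inequalities by a pointwise (in $(t,x)$) interpolation between the $L^{\infty}$-bound on $f_k$ provided by Proposition~\ref{proposition of kinetic equation} and the local weighted moment density
$$g(t,x):=\int_a^b\int_{\R^3}r^3|\xi|^p f_k\,d\xi\,dr,$$
after which one integrates in $x$ with the precise exponent that forces $g$ to enter linearly. This is the classical moment-interpolation device of Lions--Perthame \cite{LP} in the form of \cite{H}; the only extra feature is the radius weight, which is harmless because $r\in[a,b]$ is bounded away from $0$ and $\infty$, so in particular $r\le a^{-2}r^3$ on $[a,b]$. Throughout, $C$ will denote a constant depending only on $a,b,N$.

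First I would split the velocity integral at an arbitrary radius $R>0$. For the zero moment, on $\{|\xi|\le R\}$ I bound $f_k$ by $\|f_k\|_{L^\infty}$ and use $\int_{|\xi|\le R}d\xi\sim R^{N}$ together with $\int_a^b r\,dr\le b^2$, giving a contribution $\le C\,b^2\|f_k\|_{L^\infty}R^{N}$; on $\{|\xi|\ge R\}$ I use $1\le |\xi|^{p}R^{-p}$ and $r\le a^{-2}r^3$ to obtain $\le C\,a^{-2}R^{-p}g(t,x)$. Optimizing in $R$, i.e. choosing $R^{N+p}\sim g/\|f_k\|_{L^\infty}$, balances the two terms and yields the pointwise estimate
$$\mathfrak{n}_k(t,x)\le C\,\|f_k\|_{L^\infty}^{\frac{p}{N+p}}\,g(t,x)^{\frac{N}{N+p}}.$$
For the first moment the argument is identical up to one extra power of $|\xi|$: on $\{|\xi|\le R\}$ one gets $\int_{|\xi|\le R}|\xi|\,d\xi\sim R^{N+1}$, while on $\{|\xi|\ge R\}$ one uses $|\xi|\le |\xi|^{p}R^{-(p-1)}$. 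The same optimization $R^{N+p}\sim g/\|f_k\|_{L^\infty}$ then produces
$$|j_k(t,x)|\le C\,\|f_k\|_{L^\infty}^{\frac{p-1}{N+p}}\,g(t,x)^{\frac{N+1}{N+p}}.$$

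Finally I would raise the first pointwise bound to the power $\tfrac{N+p}{N}$ and integrate in $x$; the exponent is chosen exactly so that $g^{\frac{N}{N+p}\cdot\frac{N+p}{N}}=g$ appears to the first power, whence $\int_{\O}g\,dx$ is precisely the global moment $\int_a^b\int_{\O}\int_{\R^3}r^3|\xi|^p f_k\,d\xi\,dx\,dr$ controlled in Lemma~\ref{Lemma of moment}. Taking the $\tfrac{N}{N+p}$-th root recovers \eqref{moment0}, and the parallel computation with exponent $\tfrac{N+p}{N+1}$ for $j_k$ recovers \eqref{moment1}. The residual powers of $\|f_k\|_{L^\infty}$, namely $\tfrac{p}{N+p}$ and $\tfrac{p-1}{N+p}$, both lie in $[0,1]$ for $p\ge 1$, so each is absorbed into the factor $(\|f_k\|_{L^\infty}+1)$ using $x^{\theta}\le 1+x$ for $\theta\in[0,1]$, $x\ge 0$. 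The one point deserving care — and what I regard as the crux — is the exponent bookkeeping: the Lebesgue indices $\tfrac{N+p}{N}$ and $\tfrac{N+p}{N+1}$ are not free parameters but are dictated by the demand that $g$ appear linearly after integration, which is exactly the mechanism that transfers the local interpolation into a bound by the single global moment.
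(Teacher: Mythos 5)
Your proof is correct and follows essentially the same route as the paper: the identical split of the $\xi$-integral at a radius $R$, the bound $r\le a^{-2}r^3$ on the tail, a pointwise (in $(t,x)$) choice of $R$ in terms of the local moment density $g(t,x)=\int_a^b\int_{\R^3}r^3|\xi|^p f_k\,d\xi\,dr$, and then integration in $x$ so that $g$ enters linearly and is controlled by Lemma \ref{Lemma of moment}. The only cosmetic difference is that the paper takes $R=g(t,x)^{\frac{1}{N+p}}$ outright, so the factor $\|f_k\|_{L^\infty}+1$ appears directly, whereas you optimize $R$ against $\|f_k\|_{L^\infty}$ and then absorb the resulting fractional power via $x^{\theta}\le 1+x$, $\theta\in[0,1]$.
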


\begin{proof} For any $R>0$,
we can estimate $n$ as follows
\begin{equation}
\label{split n}
\begin{split}
\mathfrak{n}(t,x)&=\int_a^b\int_{\R^3}r f\,d\xi\,dr=\int_a^b\int_{|\xi|\leq R}r f\,d\xi\,dr+\int_a^b\int_{|\xi|\geq R}r f\,d\xi\,dr
\\&\leq bR^N\|f\|_{L^{\infty}(\O\times\R^3\times\R^+)}+\frac{1}{a^2 R^p}\int_a^b\int_{|\xi|\geq R}r^3|\xi|^pf\,d\xi\,dr.
\end{split}
\end{equation}
Taking
 $$R=\left(\int_a^b\int_{\R^3}r^3|\xi|^pf\,d\xi\,dr\right)^{\frac{1}{N+p}}$$
 which is finite by Lemma \ref{Lemma of moment}, depending only on the initial data, yields
\begin{equation*}
\mathfrak{n}(t,x)\leq C_{N,b}(\|f\|_{L^{\infty}(\O\times\R^3\times\R^+)}+\frac{1}{a^2})\left(\int_a^b\int_{\R^3}r^3|\xi|^pf\,d\xi\,dr\right)^{\frac{N}{N+p}},
\end{equation*}
and since the estimate \ref{crucial estimate} is uniform in $[0,T]$, thus
\begin{equation*}
\|\mathfrak{n}(t,x)\|_{L^{\infty}(0,T;L^{\frac{N+p}{N}}(\O))}\leq C_{N,b,T}(\|f\|_{L^{\infty}(\O\times\R^3\times\R^+)}+\frac{1}{a^2})\left(\int_{\O}\int_a^b\int_{\R^3}r^3|\xi|^pf\,d\xi\,dr\,dx\right)^{\frac{N}{N+p}}.
\end{equation*}
We can also use the same arguments to show
$$\|j\|_{L^{\infty}(0,T;L^{\frac{N+p}{N+1}}(\O))} \leq C_{N,b,T}(\|f\|_{L^{\infty}(\O\times\R^3\times\R^+)}+1)\left(\int_{\O}\int_a^b\int_{\R^3}r^3|\xi|^pf\,d\xi\,dr\,dx\right)^{\frac{N+1}{N+p}}.$$

\end{proof}

\vskip0.3cm
\bigskip

\bigskip

 Since eigenfunctions of $$\Delta e_i=\lambda_{i}e_i\quad\text{ in } \O$$have bounded solutions,
  then
  $$\u\in L^{2}(0,T;L^{\infty}(\O)).$$
 In particular, such estimate allows us to apply Lemma \ref{Lemma of moment} to obtain
  \begin{equation}
 \label{p=5}
\int_a^b\int_{\O}\int_{\R^3}r^3|\xi|^5 f\,d\xi\,dx\,dr<\infty,
\end{equation}
  provided the initial data satisfies
\begin{equation*}
\int_a^b\int_{\O}\int_{\R^3}r^3|\xi|^p f_0\,d\xi\,dx\,dr<\infty
\end{equation*}
for any $p\geq 5.$ Therefore,
Applying Lemma \ref{Lemma of n-j}  to get estimate to the corresponding first moment of the solution of the kinetic equation to \eqref{p=5} with $p=5$ and $N=3$, we obtain
\begin{equation}
 \label{j L^2}
 \mathfrak{n}=N(\u)\in L^{\infty}(0,T; L^{\frac{8}{3}}(\O)),\quad j=L(\u)\in L^{\infty}(0,T;L^2(\O)),
 \end{equation}
 and satisfy the estimates \eqref{continuity of N(u)} and \eqref{continuity of L(u)}.
As a consequence we are able to solve the following regularized compressible Navier-Stokes part by using the estimate on the first kinetic moment $j(t,x)$ of the system
\begin{equation}
\begin{split}
\label{approximation-NS part}
&\rho_t+\Dv(\rho\u)=\varepsilon \D\rho,
\\&
(\rho\u)_{t}+\Dv(\rho\u\otimes \u)+\nabla \rho^{\gamma}+\delta\nabla\rho^{\beta}-\mu\D\u-\lambda\nabla\Dv\u-\varepsilon\nabla\u\cdot\nabla\rho+N(\u)\u=j,
\end{split}
\end{equation}
with the initial data \eqref{regularized initial data}.

\vskip0.3cm
In fact, we notice that $n\u$ is a good term for the compressible Navier-Stokes equations because $n(t,x)\geq 0$ is on the left side of the momentum equation and so it is active as an absorbing term that stabilized the momentum flow. Another advantage is that the right hand side $j(x,t)$ is bounded in  $ L^{\infty}(0,T;L^2(\O)).$
Thus the weak solution $(\rho,\u)$ to \eqref{approximation-NS part} can be constructed following the now classical approach in
 Feireisl-Novotn\'{y}-Petzeltov\'{a} \cite{FNP} and  Feireisl \cite{F04} for fluid equations.  In fact,
 we can find the approximate solutions $\u_k \in C([0; T];X_k)$
satisfy the integral equation
\begin{equation}
\begin{split}
\label{integral equation-1}
&\int_{\O}\rho\u_k(t)\cdot\varphi\,dx-\int_{\O}\m_0\cdot\varphi\,dx
=\int_0^t\int_{\O}\left(\mu\D\u_k+\lambda\nabla\Dv\u_k\right)\varphi\,dx\,dt
\\&+\int_0^T\int_{\O}\left(\varepsilon\nabla\u_k\cdot\nabla\rho-\Dv(\rho\u_k\otimes \u_k)-\nabla \rho^{\gamma}-\delta\nabla\rho^{\beta}-\mathfrak{n}\u_k+j\right)\varphi\,dx\,dt
\end{split}
\end{equation}
for any test function $\varphi\in X_k$.
\vskip0.3cm

In order to solve \eqref{integral equation-1}, we follow the same arguments as in  \cite{FNP, F04}, and introduce the following two operators that are crucial to apply fixed point arguments later by generating an ODE in a suitable Banach space.

 In our case, the iteration map for a fixed point argument is constructed as follows.
 For any given $\u\in C([0,T];X_k)$, $\rho$ is a solution to the following problem
 \begin{equation}\label{b2}
\begin{aligned}
\left\{\begin{array}{lll}
\partial_{t}\rho+\rm{div}(\rho u)=\varepsilon \triangle\rho,\\
\rho_{0}\in C^{\infty}(\mathbb{T}^{3}),\ \ \rho_{0}\geq \underline{\rho}>0.
\end{array}\right.
\end{aligned}
\end{equation}
First, we introduce the operator $\mathcal{S}$ as follows $$\mathcal{S}:C([0,T];X_{k})\rightarrow C([0,T];C(\O)), \rho=\mathcal{S}(\u),$$ and recall the following two Propositions that can be found in \cite{FNP}

\begin{Proposition}\label{pro1}
If $0<\underline{\rho}\leq \rho_0\leq \overline{\rho},\ \rho_{0}\in C^{\infty}(\O),\ \u\in C([0,T];X_{k})$, then there exists an operator $\mathcal{S}:C([0,T];X_{k})\rightarrow C([0,T];C(\O))$ satisfying

\

\begin{itemize}
  \item [{\bf i)}]$\rho=\mathcal{S}(\u)$ is an unique solution to the problem (\ref{b2}).

  \

  \item   [{\bf ii)}]  Density bounds:
\begin{equation}
  \label{boundness of density}
  0<\underline{\rho}e^{-\int_{0}^{T}\|{\rm div}\u\|_{L^{\infty}}dt}\leq \rho(x,t)\leq \overline{\rho}e^{\int_{0}^{T}\|{\rm div}\u\|_{L^{\infty}}dt},\ \ \text{for any } x\in \O,\ \ t\geq0.
  \end{equation}

\

  \item  [{\bf iii)}] { Lipchitz condition:} \begin{equation}
  \label{contunity of S(u)}
\|\mathcal{S}(u_{1})-\mathcal{S}(u_{2})\|_{C([0,T];C(\O))}\leq  T C( \rho_{0},\varepsilon,L)
\|\u_{1}-\u_{2}\|_{C([0,T];X_{k})},\end{equation} \\ for any $\u_{1},\u_{2}$ in the following set $$M_{L}=\{\u\in C([0,T];X_{k});\|\u\|_{C([0,T];X_{k})}\leq L,\ t\in\ [0,T]\}.$$
\end{itemize}
\end{Proposition}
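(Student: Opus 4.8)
The plan is to regard \eqref{b2}, for a fixed $\u\in C([0,T];X_k)$, as a linear advection--diffusion equation for $\rho$. Writing the divergence term out,
\begin{equation*}
\partial_t\rho-\eps\D\rho+\u\cd\na\rho+(\Dv\u)\,\rho=0 ,
\end{equation*}
I first note that every element of $X_k$ is a finite combination of the smooth eigenfunctions $e_i$, so $\u$ together with all its spatial derivatives is bounded on $\O=\mathbb T^3$, continuously in $t$; in particular the drift $\u$ and the zeroth--order coefficient $\Dv\u$ are smooth and bounded. Part i) is then a consequence of classical linear parabolic theory on the torus (the Galerkin/semigroup construction used in \cite{FNP,F04}): for smooth data $\rho_0\in C^\infty(\O)$ there is a unique solution $\rho=\mathcal S(\u)$, which by parabolic smoothing inherits the regularity of the data, so in particular $\rho\in C([0,T];C(\O))$. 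This defines the map $\mathcal S$.

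For the two--sided bound \eqref{boundness of density} I would invoke the parabolic comparison principle with the $x$--independent barriers $\un\rho\,b_-(t)$ and $\ov\rho\,b_+(t)$, where $b_\pm(t)=e^{\pm\int_0^t\|\Dv\u\|_{L^\infty}\,ds}$. Denoting by $L\rho=\partial_t\rho-\eps\D\rho+\u\cd\na\rho+(\Dv\u)\rho$ the operator above, a direct computation using $\D b_\pm=0$, $\na b_\pm=0$ gives
\begin{equation*}
L\big(\un\rho\,b_-\big)=\big(\Dv\u-\|\Dv\u\|_{L^\infty}\big)\un\rho\,b_-\le 0,\qquad
L\big(\ov\rho\,b_+\big)=\big(\Dv\u+\|\Dv\u\|_{L^\infty}\big)\ov\rho\,b_+\ge 0 ,
\end{equation*}
since $|\Dv\u|\le\|\Dv\u\|_{L^\infty}$ and the barriers are positive. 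As $\un\rho\,b_-(0)=\un\rho\le\rho_0\le\ov\rho=\ov\rho\,b_+(0)$ and $L\rho=0$, comparison yields $\un\rho\,b_-(t)\le\rho(x,t)\le\ov\rho\,b_+(t)$, which is exactly \eqref{boundness of density}; in particular $\rho$ stays strictly bounded away from vacuum.

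Part iii) is the main technical point. With $\rho_i=\mathcal S(\u_i)$ and $\sigma=\rho_1-\rho_2$, subtracting the two equations gives
\begin{equation*}
\partial_t\sigma-\eps\D\sigma+\Dv(\sigma\u_1)=-\Dv\big(\rho_2(\u_1-\u_2)\big),\qquad\sigma(0)=0 .
\end{equation*}
Testing against $\sigma$, the advection term produces $\tfrac12\int_\O(\Dv\u_1)\sigma^2$, bounded by $\tfrac12\|\Dv\u_1\|_{L^\infty}\|\sigma\|_{L^2}^2$, while the source, after one integration by parts, equals $\int_\O\rho_2(\u_1-\u_2)\cd\na\sigma$ and is absorbed by Young's inequality into $\tfrac{\eps}{2}\|\na\sigma\|_{L^2}^2+\tfrac1{2\eps}\|\rho_2\|_{L^\infty}^2\|\u_1-\u_2\|_{L^2}^2$. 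Using the bound on $\|\rho_2\|_{L^\infty}$ from ii), the bound $\|\Dv\u_1\|_{L^\infty}\le C(L)$ valid on $M_L$ (finite--dimensional equivalence of norms), and Gr\"onwall's inequality, one obtains an $L^2$ estimate of the form $\|\sigma\|_{C([0,T];L^2)}\le T\,C(\rho_0,\eps,L)\|\u_1-\u_2\|_{C([0,T];X_k)}$.

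The genuinely delicate step, which I expect to be the main obstacle, is upgrading this $L^2$ control to the $C([0,T];C(\O))$ norm demanded by \eqref{contunity of S(u)}: the energy method only sees $\|\sigma\|_{L^2}$, whereas the source $\Dv(\rho_2(\u_1-\u_2))=\na\rho_2\cd(\u_1-\u_2)+\rho_2\,\Dv(\u_1-\u_2)$ already requires control of $\na\rho_2$. I would therefore first establish, uniformly over $M_L$, a priori smoothness bounds for $\rho_2=\mathcal S(\u_2)$ (in particular a bound on $\na\rho_2$) by bootstrapping higher--order energy estimates, differentiating the equation and using that the coefficients are smooth with norms controlled by $L$, $\rho_0$ and $\eps$. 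Running the $\sigma$--estimate again at the level of $\na\sigma$ and $\D\sigma$, and invoking the Sobolev embedding $H^2(\mathbb T^3)\hookrightarrow C(\O)$, then converts the energy bounds into the required uniform estimate, with a constant $C(\rho_0,\eps,L)$ that degenerates as $\eps\to0$, as expected from the viscous regularization. This yields all three assertions.
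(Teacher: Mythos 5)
Your proposal is sound, but note first that the paper itself contains no proof of Proposition \ref{pro1}: it is quoted verbatim from Feireisl--Novotn\'{y}--Petzeltov\'{a} \cite{FNP} (pp.~363--364), so the comparison is with that source rather than with an in-paper argument. Your parts i) and ii) coincide with the standard treatment there: classical linear parabolic theory applies because $\u\in C([0,T];X_{k})$ has smooth, bounded spatial derivatives, and your barrier computation is correct --- $L(\underline{\rho}\,b_{-})=(\Dv\u-\|\Dv\u\|_{L^{\infty}})\underline{\rho}\,b_{-}\le 0$ and symmetrically for $b_{+}$ --- with the sign-changing zeroth-order coefficient $\Dv\u$ handled by the usual exponential-tilt maximum principle you implicitly invoke (your time-dependent bounds imply the uniform ones in \eqref{boundness of density}). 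For part iii) you take a genuinely different, more elementary route: an $L^{2}$ energy estimate for $\sigma=\rho_{1}-\rho_{2}$ (your difference equation and the identity $\int_{\O}\Dv(\sigma\u_{1})\sigma\,dx=\frac{1}{2}\int_{\O}(\Dv\u_{1})\sigma^{2}\,dx$ are correct), upgraded to $C([0,T];C(\O))$ via higher-order energy estimates and $H^{2}(\mathbb{T}^{3})\hookrightarrow C(\O)$, whereas \cite{FNP} obtains the Lipschitz bound directly from classical parabolic (H\"{o}lder/semigroup smoothing) estimates for the solution operator. Your route is viable in this setting precisely because $\rho_{0}\in C^{\infty}(\O)$ and the coefficients are smooth with norms controlled by $L$ through finite-dimensional norm equivalence on $X_{k}$, so the bootstrap bounds on $\nabla\rho_{2}$ and $\Delta\rho_{2}$ that your $H^{2}$-level $\sigma$-estimate requires are routine; what it buys is self-containedness, at the price of cruder, $\eps$-degenerate constants. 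Two small caveats: Gr\"{o}nwall applied to $\frac{d}{dt}\|\sigma\|_{H^{2}}^{2}\le C\|\sigma\|_{H^{2}}^{2}+C\|\u_{1}-\u_{2}\|^{2}$ produces a factor $\sqrt{T}\,e^{CT}$ rather than the linear factor $T$ stated in \eqref{contunity of S(u)} --- harmless, since the proposition is only used to obtain a contraction on a short interval $[0,T']$ in the fixed-point equation \eqref{fixed point} --- and your constant also depends on $k$ via the norm equivalence, which is consistent with how the result is used at fixed $k$ but should be recorded in $C(\rho_{0},\eps,L)$.
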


\vskip0.3cm

 In addition, for any given function $\rho\in C^{1}(\O)$ with $\rho \geq\underline{\rho}>0$, we introduce an operator $\mathcal{M}$ for fixed $t$, satisfying
$$\mathcal{M}[\rho]:X_{k}\rightarrow X_{k}^{*},\ \ <\mathcal{M}[\rho] \u,\v>=\int_{\O}\rho \u\cdot \v\,dx,\ \ \text{ for any }\u,\ \v\in X_{k},$$
and we recall from \cite{FNP}, (page 363-364)  the following proposition describing the properties of $\mathcal{M}$:

\begin{Proposition}\label{pro2}
 For any given function $\rho\in C^0(0,T;C^{1}(\O))$ with $\rho \geq\underline{\rho}>0$, where $\underline{\rho}$ is a constant,
\begin{itemize}
  \item  [{\bf i)}] $\|\mathcal{M}[\rho]\|_{\mathcal{L}(X_{k},X_{k}^{*})}\leq C(k)\|\rho\|_{L^{1}}.$

 \

  \item[{\bf ii)}]$\|\mathcal{M}[\rho]\|_{\mathcal{L}(X_{k},X_{k}^{*})}\geq \inf_{x\in\O}\rho$

 \

    \item  [{\bf iii)}]  If $\inf_{x\in\O}\rho\geq \underline{\rho}>0$, then the operator is invertible with
      $$\|\mathcal{M}^{-1}[\rho]\|_{\mathcal{L}(X_{k}^{*},X_{k})}\leq \underline{\rho}^{-1},$$
      where $\mathcal{L}(X_{k}^{*},X_{k})$ is the set of bounded liner mappings from $X_{k}^{*}$ to $X_{k}$.

      \

   \item  [{\bf iv)}] $\mathcal{M}^{-1}[\rho]$ is Lipschitz continuous in $X_k^*$ in the sense
  \begin{equation}
  \label{contunity of inverse function}
  \|\mathcal{M}^{-1}[\rho_{1}]-\mathcal{M}^{-1}[\rho_{2}]\|_{\mathcal{L}(X_{k}^{*},X_{k})}\leq C(n,\underline{\rho})\|\rho_{1}-\rho_{2}\|_{L^{1}(\O)}
  \end{equation}
  for all $\rho_{1},\rho_{2}\in C^0(0,T;L^{1}(\O))$ such that $\rho_{1},\rho_{2}\geq \underline{\rho}>0$.
\end{itemize}
\end{Proposition}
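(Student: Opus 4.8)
The plan is to read $\mathcal{M}[\rho]$ as the symmetric bilinear form $(\u,\v)\mapsto\int_\O\rho\,\u\cdot\v\,dx$ on the finite dimensional space $X_k$, and to exploit three elementary facts: the equivalence of all norms on $X_k$, the coercivity coming from $\rho\ge\underline{\rho}$, and the linearity of the map $\rho\mapsto\mathcal{M}[\rho]$. Throughout I equip $X_k$ with the $L^2(\O)$ norm and use that, since each $e_i$ is a smooth eigenfunction of $-\D$ on $\mathbb{T}^3$, every $\u\in X_k$ obeys $\|\u\|_{L^\infty(\O)}\le C(k)\|\u\|_{L^2(\O)}$.

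For i) and ii) I would work directly with the form. H\"{o}lder's inequality gives, for $\u,\v\in X_k$,
\[
|\langle\mathcal{M}[\rho]\u,\v\rangle|\le\|\rho\|_{L^1(\O)}\|\u\|_{L^\infty(\O)}\|\v\|_{L^\infty(\O)}\le C(k)\|\rho\|_{L^1(\O)}\|\u\|_{X_k}\|\v\|_{X_k},
\]
and taking suprema over unit vectors yields i). For the lower bound ii), symmetry of the form lets me test on the diagonal: with $\v=\u$, $\langle\mathcal{M}[\rho]\u,\u\rangle=\int_\O\rho|\u|^2\,dx\ge(\inf_\O\rho)\|\u\|_{X_k}^2$, so $\|\mathcal{M}[\rho]\u\|_{X_k^*}\ge(\inf_\O\rho)\|\u\|_{X_k}$ and hence $\|\mathcal{M}[\rho]\|_{\mathcal{L}(X_k,X_k^*)}\ge\inf_\O\rho$.

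For iii) the decisive ingredient is coercivity: if $\inf_\O\rho\ge\underline{\rho}>0$ then $\langle\mathcal{M}[\rho]\u,\u\rangle\ge\underline{\rho}\|\u\|_{X_k}^2$. Since the form is symmetric and coercive on the finite dimensional Hilbert space $X_k$, the Lax--Milgram theorem yields, for each $g\in X_k^*$, a unique $\u=\mathcal{M}^{-1}[\rho]g$; testing $\mathcal{M}[\rho]\u=g$ against $\u$ gives $\underline{\rho}\|\u\|_{X_k}^2\le\langle g,\u\rangle\le\|g\|_{X_k^*}\|\u\|_{X_k}$, whence $\|\mathcal{M}^{-1}[\rho]\|_{\mathcal{L}(X_k^*,X_k)}\le\underline{\rho}^{-1}$. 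For iv) I would use the resolvent identity $\mathcal{M}^{-1}[\rho_1]-\mathcal{M}^{-1}[\rho_2]=\mathcal{M}^{-1}[\rho_1]\bigl(\mathcal{M}[\rho_2]-\mathcal{M}[\rho_1]\bigr)\mathcal{M}^{-1}[\rho_2]$, valid by iii). By linearity $\mathcal{M}[\rho_2]-\mathcal{M}[\rho_1]=\mathcal{M}[\rho_2-\rho_1]$, and the estimate of i)---whose proof used only H\"{o}lder and not the sign of $\rho$---gives $\|\mathcal{M}[\rho_2-\rho_1]\|\le C(k)\|\rho_1-\rho_2\|_{L^1(\O)}$; combining with $\|\mathcal{M}^{-1}[\rho_i]\|\le\underline{\rho}^{-1}$ produces $\|\mathcal{M}^{-1}[\rho_1]-\mathcal{M}^{-1}[\rho_2]\|_{\mathcal{L}(X_k^*,X_k)}\le C(k)\underline{\rho}^{-2}\|\rho_1-\rho_2\|_{L^1(\O)}$, which is iv).

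I do not expect a genuine obstacle, as the statement is finite dimensional linear algebra written in integral notation; the points needing care are that the $k$-dependent constant in i) must be traced to the $L^\infty$--$L^2$ norm equivalence on $X_k$ (which rests on the boundedness of the Laplacian eigenfunctions on $\mathbb{T}^3$), and that in iv) the bound of i) must be applied to the \emph{signed} difference $\rho_1-\rho_2$, so that positivity is not inadvertently invoked where it is not available.
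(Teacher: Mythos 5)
Your proof is correct and follows essentially the same route as the standard argument the paper defers to (it cites Feireisl--Novotn\'{y}--Petzeltov\'{a} \cite{FNP}, pp.~363--364, rather than proving the proposition itself): equivalence of norms on $X_k$ for the upper bound, coercivity from $\rho\ge\underline{\rho}$ for invertibility, and the resolvent identity $\mathcal{M}^{-1}[\rho_1]-\mathcal{M}^{-1}[\rho_2]=\mathcal{M}^{-1}[\rho_1]\bigl(\mathcal{M}[\rho_2]-\mathcal{M}[\rho_1]\bigr)\mathcal{M}^{-1}[\rho_2]$ together with linearity $\mathcal{M}[\rho_2]-\mathcal{M}[\rho_1]=\mathcal{M}[\rho_2-\rho_1]$ for the Lipschitz bound iv). Your explicit remarks that the constant in i) comes from the $L^\infty$--$L^2$ equivalence on $X_k$ and that the bound in iv) is applied to the signed difference $\rho_1-\rho_2$ are exactly the points that need care, and both are handled correctly.
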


The proofs of these two propositions can be found on   \cite{FNP}  (page 363)   and (page 363-364) respectively. They are sufficient in order to show the needed compactness for the existence of a fixed point solution set.

%%%
%{\bf Cheng please add the page numbers were to find this proof in reference [12]}
%%%%

\vskip0.8cm

We apply the  strategy of  \cite{FNP}  to the problem under consideration, namely the existence of solutions to the coupled  compressible fluid equation to the gas kinetic equation, done through
 the gas density $\mathfrak{n}$ defined by \eqref{zero moment} and gas current $j$ defined by \eqref{first moment}.

 Indeed, making  use of the operators $\mathcal{M}[\rho]$, $\rho=\mathcal{S}(\u_{k})$, $\mathfrak{n}=N(\u_k)$ and $j=L(\u_k)$,
 we rewrite \eqref{integral equation-1} as the following ordinary differential equation on the finite-dimensional space $X_k$:
\begin{equation}\label{finite dimensional ode}
\begin{aligned}
&\frac{d}{dt}\left(\mathcal{M}[\mathcal{S}(\u_k)(t)]\u_k(t)\right)=\mathcal{N}
(\mathcal{S}(\u_k),N(\u_k),L(\u_k),\u_k), \quad t>0,
\\& \mathcal{M}[\mathcal{S}(\u_k)(0)]\u_k(0)=\mathcal{M}[\rho_0]\u_0,
\end{aligned}
\end{equation}
where
\begin{equation*}
\begin{split}
[\mathcal{N}
(\mathcal{S}(\u_k),N(\u_k),L(\u_k),\u_k),\varphi]&=\int_{\O}\left(\mu\D\u_k+\lambda\nabla\Dv\u_k+
\varepsilon\nabla\u_k\cdot\nabla\rho\right)\cdot\varphi\,dx\\&
-\int_{\O}\left(\Dv(\rho\u_k\otimes \u_k)+\nabla \rho^{\gamma}+\delta\nabla\rho^{\beta}+\mathfrak{n}\u_k-j\right)\cdot\varphi\,dx,
\end{split}
\end{equation*}
for all $\varphi\in X_k.$
 Integrating \eqref{finite dimensional ode} over $(0,t)$, we can write the problem as the following nonlinear problem:
\begin{equation}\label{fixed point}
\begin{aligned}
\u_k(t)=\mathcal{M}^{-1}[\mathcal{S}(\u_k)(t)]\big(\mathcal{M}[\rho_0]\u_0
+\int_{0}^{T}\mathcal{N}
(\mathcal{S}(\u_k),N(\u_k),L(\u_k),\u_k)(s)ds\big).
\end{aligned}
\end{equation}
Since $\mathcal{N}
(\mathcal{S}(\u_k),N(\u_k),L(\u_k),\u_k)$ is a Liptzchiz function, as all its argument from \eqref{continuity of N(u)}, \eqref{continuity of L(u)}, \eqref{contunity of S(u)} and \eqref{contunity of inverse function}, this equation can be solved with the fixed-point theorem of Banach, at least on
a small time $0<T'\leq T.$ Thus, we obtained $\u_k\in C^0(0,T';X_k).$

\bigskip

 In order to extend the existence final time in order to get $T'=T,$ it is enough to show there exists uniform estimates on solution triple $(\rho_k,\u_k,f_k)$ in suitable functional spaces defined over the finite dimensional space $X_k$.

 Indeed, the following definition of a suitable energy functional and subsequent proposition provide the global in time existence of solutions to the approximation  system \eqref{approximation-1}-\eqref{regularized initial data}.

 We first define the following energy functional associate to solutions of system \eqref{approximation-1}-\eqref{regularized initial data}.

  \begin{Definition}[{\sl The Energy Functional}]
 \label{def-energy}
 The natural energy functional associated to the triple
 $(\rho_k,\u_k,f_k)$    solution to the approximation  system \eqref{approximation-1}-\eqref{regularized initial data} is given by
\begin{equation}\begin{split}\label{energy-functional}
E(t):=E(\rho_k,\u_k,f_k)(t)\ &:=\ \int_{\O}(\frac{1}{2}\rho_k|\u_k|^2+\frac{\rho_k^{\gamma}}{\gamma-1}+\frac{\delta}{\beta-1}\rho_k^{\beta})\,dx\nonumber\\
&+\int_{\O}\int_a^b\int_{\R^3}r^3(1+|\xi|^2)f_k\,d\xi\,dr\,dx,
\end{split}\end{equation}
The corresponding initial energy is
\begin{equation}\label{energy-functional-ini}
E_0:=\int_{\O}(\frac{\m_0^2}{2\rho_0}+\frac{\rho_0^{\gamma}}{\gamma-1}+\frac{\delta}{\beta-1}\rho_0^{\beta})\,dx+\int_{\O}\int_a^b\int_{\R^3}r^3(1+|\xi|^2)f_0\,d\xi\,dr\,dx.
\end{equation} \end{Definition}

The desired estimates will follow from the following result.

  \begin{Proposition}[{\sl The Energy Inequality}]
 \label{Proposition at the first level}
 Let the triple $(\rho_k,\u_k,f_k)$  be the solution to system \eqref{approximation-1}-\eqref{regularized initial data} constructed above, then
for any $T>0$, the   $(\rho_k,\u_k,f_k)$ satisfies the following energy inequality
\begin{equation}
\label{approximation-energy inequality}
\begin{split}
& E(t)
\ +\ \mu\int_0^T\int_{\O}|\nabla\u_k|^2\,dx\,dt+\lambda\int_0^T\int_{\O}|\Dv\u_k|^2\,dx\,dt\\
&\qquad\qquad\qquad   +\ \varepsilon\int_0^T\int_{\O}(\gamma\rho_k^{\gamma-2}+\delta\beta\rho_k^{\beta-2})|\nabla\rho_k|^2\,dx\,dt
\  \leq \ E_0.
\end{split}
\end{equation}
 \end{Proposition}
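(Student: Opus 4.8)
The plan is to derive the energy inequality \eqref{approximation-energy inequality} by testing the regularized momentum equation against $\u_k$, testing the regularized mass equation against the appropriate pressure potential, and testing the kinetic equation against $r^3(1+|\xi|^2)$, then adding the three identities so that the coupling terms cancel in pairs. The strategy mirrors the a priori computation in Section~2 (see \eqref{energy for NS}, \eqref{energy for VB}, \eqref{energy inequality}), except that now I must carefully track the extra $\eps$- and $\delta$-contributions produced by the regularization in \eqref{approximation-1}, and verify that they enter with a favorable sign (so they survive on the left) or vanish.

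First I would work on the fluid part. Multiply the regularized momentum equation by $\u_k$ and integrate over $\O$. Using the regularized mass equation $\rho_t+\Dv(\rho\u_k)=\eps\D\rho$ to rewrite the material-derivative terms, the convective term $\Dv(\rho\u_k\otimes\u_k)\cdot\u_k$ together with $(\rho\u_k)_t\cdot\u_k$ produces $\frac{d}{dt}\int_\O \frac12\rho_k|\u_k|^2\,dx$ plus a corrective term coming from the $\eps\D\rho$ source; the pressure terms $\nabla\rho_k^\gamma$ and $\delta\nabla\rho_k^\beta$ give, after integration by parts and use of the mass equation, the time derivatives $\frac{d}{dt}\int_\O(\frac{\rho_k^\gamma}{\gamma-1}+\frac{\delta}{\beta-1}\rho_k^\beta)\,dx$ together with the \emph{dissipative} viscosity-of-density terms $\eps\int_\O(\gamma\rho_k^{\gamma-2}+\delta\beta\rho_k^{\beta-2})|\nabla\rho_k|^2\,dx$, which is exactly the term appearing on the left of \eqref{approximation-energy inequality}. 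The Laplacian and $\nabla\Dv$ terms give the dissipation $\mu\int_\O|\nabla\u_k|^2+\lambda\int_\O|\Dv\u_k|^2$. The key point is that the artificial terms $-\eps\nabla\u_k\cdot\nabla\rho$ and $+\mathfrak{n}\u_k$ in the momentum equation are designed precisely so that, when tested against $\u_k$, the $\eps\nabla\u_k\cdot\nabla\rho$ contribution cancels against the corrective term left over from using $\eps\D\rho$ in the mass equation, leaving only the manifestly nonnegative density-gradient dissipation; this cancellation is the standard Feireisl--Novotn\'y--Petzeltov\'a mechanism and the reason for that particular regularization.

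Next I would handle the coupling and the kinetic energy. Testing the momentum equation against $\u_k$ produces the drag term $-\int_a^b\int_\O\int_{\R^3} r f_k(\u_k-\xi)\cdot\u_k\,d\xi\,dx\,dr$ (packaged via $\mathfrak{n}\u_k-j$). On the kinetic side, Proposition~\ref{energy equality for kinetic part} gives exactly
\begin{equation*}
\frac{d}{dt}\int_{\O}\int_a^b\int_{\R^3}r^3(1+|\xi|^2)f_k\,d\xi\,dr\,dx=2\int_{\O}\int_a^b\int_{\R^3}r(\u_{k-1}-\xi)f_k\,\xi\,d\xi\,dr\,dx,
\end{equation*}
and Lemma~\ref{Lemma of operator} with $p=2$ guarantees the collision operator contributes nothing. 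Adding the fluid and kinetic identities, the two coupling terms combine into $+2\int_a^b\int_\O\int_{\R^3} r f_k|\u_{k-1}-\xi|^2\,d\xi\,dx\,dr\ge 0$, exactly as in \eqref{energy inequality}. Since this is nonnegative, it can be dropped to yield an inequality rather than an equality, which is why the statement is an inequality.

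Finally I would integrate the resulting differential inequality in time over $(0,t)$ and observe that all the left-hand dissipative terms are nonnegative, so discarding the drag dissipation gives $E(t)+(\text{dissipation})\le E_0$, which is \eqref{approximation-energy inequality}. \textbf{The main obstacle} I anticipate is the bookkeeping at the fluid level: verifying rigorously that the $\eps\D\rho$ source in the mass equation combines with the artificial $-\eps\nabla\u_k\cdot\nabla\rho$ and $\mathfrak{n}\u_k$ terms to produce precisely the nonnegative density-gradient dissipation and nothing of indefinite sign. A secondary subtlety is that the kinetic identity in Proposition~\ref{energy equality for kinetic part} is stated with the previous iterate $\u_{k-1}$ in the drag while the fluid identity naturally carries $\u_k$; I would either appeal to the fixed-point structure (where the relevant solution has $\u_{k-1}=\u_k$ after convergence of the iteration) or note that at the level of the constructed solution the two coincide, so that the coupling terms cancel exactly and the completion-of-square producing $|\u_k-\xi|^2$ is legitimate.
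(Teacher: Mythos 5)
Your proposal is correct and follows essentially the same route as the paper: the paper's proof takes $\varphi=\u_k$ in the Galerkin formulation \eqref{integral equation-1} to get the fluid identity \eqref{energy-NS part} (with the $\varepsilon$-density-gradient dissipation arising exactly from the FNP cancellation you describe), adds the kinetic identity of Proposition~\ref{energy equality for kinetic part} so the coupling terms complete the square into $\int r f_k|\u_k-\xi|^2\ge 0$, integrates in time, and drops the nonnegative drag dissipation to obtain \eqref{approximation-energy inequality}. Your resolution of the $\u_{k-1}$ versus $\u_k$ mismatch via the fixed point is precisely what the paper relies on implicitly, since the constructed solution satisfies the kinetic equation with its own $\u_k$.
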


  %At this point we need to work with the weak formulation \eqref{integral equation-1} firstly.
\begin{proof} First,
taking $\varphi=\u_k$ in \eqref{integral equation-1}, one obtains the following identity corresponding to the regularized Navier-Stokes part \eqref{approximation-NS part}
 \begin{equation}
 \label{energy-NS part}
 \begin{split}
&\frac{d}{dt}\int_{\O}(\frac{1}{2}\rho_k|\u_k|^2+\frac{\rho_k^{\gamma}}{\gamma-1}+\frac{\delta}{\beta-1}\rho_k^{\beta})\,dx
\\&+\mu\int_{\O}|\nabla\u_k|^2\,dx+\lambda\int_{\O}|\Dv\u_k|^2\,dx+\varepsilon\int_{\O}(\gamma\rho_k^{\gamma-2}+\delta\beta\rho_k^{\beta-2})|\nabla\rho_k|^2\,dx
\\&+\int_{\O}\mathfrak{n}_k|\u_k|^2\,dx=\int_{\O}j_k\u_k\,dx,
\end{split}
 \end{equation}
 for any  $t\in [0,T'].$ Next,
% In order to deduce the uniformly estimates for the whole approximation system \eqref{approximation-1}-\eqref{regularized initial data}, we
 applying Proposition \ref{energy equality for kinetic part}, and adding \eqref{energy-NS part}, we obtain
 the following $L^2$ energy identity for the whole system that includs the kinetic equation \eqref{kinetic}:
\begin{equation*}
\begin{split}
&\frac{d}{dt}\left(\int_{\O}(\frac{1}{2}\rho_k|\u_k|^2+\frac{\rho_k^{\gamma}}{\gamma-1}+\frac{\delta}{\beta-1}\rho_k^{\beta})\,dx+\int_{\O}\int_a^b\int_{\R^3}r^3(1+|\xi|^2)f_k\,d\xi\,dr\,dx\right)
\\&+\mu\int_{\O}|\nabla\u_k|^2\,dx+\lambda\int_{\O}|\Dv\u_k|^2\,dx+\varepsilon\int_{\O}(\gamma\rho_k^{\gamma-2}+\delta\beta\rho_k^{\beta-2})|\nabla\rho_k|^2\,dx
\\&+\int_{\O}\int_a^b\int_{\R^3}r f_k|\u_k-\xi|^2\,d\xi\,dr\,dx=0
\end{split}
\end{equation*}
on $[0,T'].$

Integrating  with respect to $t$, we deduce
 the following energy identity
\begin{equation*}
\label{energy identity for approximation}
\begin{split}& E(\rho_k,\u_k,f_k)(t)
+\mu\int_0^{T_{k}}\int_{\O}|\nabla\u_k|^2\,dx\,dt+\lambda\int_0^{T_{k}}\int_{\O}|\Dv\u_k|^2\,dx\,dt\\
&\qquad\qquad +\ \varepsilon\int_0^{T_{k}}\int_{\O}(\gamma\rho_k^{\gamma-2}+\delta\beta\rho_k^{\beta-2})|\nabla\rho_k|^2\,dx\,dt\\
&\qquad\qquad \qquad+\ \int_0^{T_{k}}\int_{\O}\int_a^b\int_{\R^3}r f_k|\u_k-\xi|^2\,d\xi\,dr\,dx\,dt
\ = \ E_0,
\end{split}
\end{equation*}
on $[0,T'],$ where the total energy energy $E(t)=E(\rho_k,\u_k,f_k)(t)$ and its initial form $E_0$ were defined in
\eqref{energy-functional} and \eqref{energy-functional-ini}, respectively.

In particular, since both terms
\begin{equation*}\begin{split}
&\qquad\ \varepsilon\int_0^{T_{k}}\int_{\O}(\gamma\rho_k^{\gamma-2}+\delta\beta\rho_k^{\beta-2})|\nabla\rho_k|^2\,dx\,dt\\
&\text{and }\\
&\qquad\ \int_0^T\int_{\O}\int_a^b\int_{\R^3}r f_k|\u_k-\xi|^2\,d\xi\,dr\,dx\,dt,
\end{split}\end{equation*}
are non-negative, then  the energy inequality \eqref{approximation-energy inequality} naturally.
\end{proof}

 the   energy inequality \eqref{approximation-energy inequality}, together with estimate \eqref{boundness of density}, yield the following uniform bounds in $k$ and $\varepsilon$, for the the components of the triple solutions to system \eqref{approximation-1}-\eqref{regularized initial data}
\begin{equation}
\begin{split}\label{coro-energy}
&\|\u_k\|_{L^{\infty}(0,T;L^2(\O))}\leq C_0<\infty,\\
&\|\rho_k\|_{L^{\infty}(0,T;L^{\gamma}(\O))}\leq C_0<\infty,
\\&\|\nabla\u_k\|_{L^2(0,T;L^2(\O))}\leq C_0<\infty,
\end{split}
\end{equation}
where $C_0$ only depends on the initial data.

To end, noting  that the  $L^{\infty}(X_k)$ and $L^2(X_k)-$norms are equivalent on the finite dimensional space $X_k$, then
\begin{equation*}\sup_{t\in[0,T_k]}\left(\|\u_k\|_{L^{\infty}}+\|\nabla\u_k\|_{L^{\infty}}\right)\leq C_0(E_0).
\end{equation*}
As a consequence of this observation, the existence time interval $[0,T']$
can be extended to $[0,T].$ for all $T>0.$

Hence, the existence proof  of a weak solution triple $(\rho^{\eps,\delta}_k,\u^{\eps,\delta}_k,f^{\eps,\delta}_k)$ to the regularization \eqref{approximation-1}-\eqref{regularized initial data} for any $T>0$ is completed.

\vskip0.3cm

\section{Recover the weak solutions}

 In order to complete Theorem \ref{T1},we need to recover weak solutions to \eqref{operator}-\eqref{NSVB3}. To this end, we pass to the limits in the following order, as $k\to\infty$, next $\varepsilon\to 0$ and finally $\delta\to 0$,  for the unique solutions constructed as in Proposition \ref{Proposition at the first level}. Here we use the triple $(\rho_k,\u_k,f_k)$ to denote the solution constructed as in Proposition \ref{Proposition at the first level}, were still omit the supraindexed $\eps$ and $\delta$ for notation simplicity.

Thanks to \eqref{approximation-energy inequality},  the following uniformly estimates hold
\begin{equation}
\label{NS-estimate1}
\|\sqrt{\rho_k}\u_k\|_{L^{\infty}(0,T;L^2(\O))}\leq C<\infty,
\end{equation}

\begin{equation}
\label{NS-estimate2}
\|\rho_k\|_{L^{\infty}(0,T;L^{\gamma}(\O))}\leq C<\infty,
\end{equation}

\begin{equation}
\label{NS-estimate3}
\|\nabla\u_k\|_{L^2(0,T;L^2(\O))}\leq C<\infty,
\end{equation}

\begin{equation}
\label{NS-estimate4}
\delta\int_{\O}\frac{1}{\beta-1}\rho_k^{\beta}\,dx\leq C<\infty,\quad\text{ for any } t\in(0,T),
\end{equation}

\begin{equation}
\varepsilon\int_0^T\int_{\O}(\gamma\rho_k^{\gamma-2}+\delta\beta\rho_k^{\beta-2})|\nabla\rho_k|^2\,dx\,dt\leq C<\infty,
\end{equation}

\begin{equation}
\label{VB-estimate1}
\int_{\O}\int_a^b\int_{\R^3}r^3(1+|\xi|^2)f_k\,d\xi\,dr\,dx\leq C<\infty,\quad\text{ for any } t\in(0,T).
\end{equation}\\
\newline
Then a consequence we can show
 the following Lemma.
\begin{Lemma}
\label{Lemma n-j approximation}
There exists a constant $C$ independent on index $k,$ and regularization parameters $\varepsilon$ and $\delta$ such that
\begin{equation}\label{zero moment estimate in approximation}
\|\mathfrak{n}_k(t)\|_{L^{\infty}(0,T;L^2(\O))}\leq C,
\end{equation}
\begin{equation}
\label{first moment estimate in approximation}
\|j_k(t)\|_{L^{\infty}(0,T;L^{\frac{3}{2}}(\O))}\leq C.
\end{equation}
\end{Lemma}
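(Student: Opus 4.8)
The plan is to read off both bounds from Lemma~\ref{Lemma of n-j} with the dimensional choice $N=3$ and the moment order $p=3$. With these values the exponents produced by that lemma are precisely $\frac{N+p}{N}=2$ for $\mathfrak{n}_k$ and $\frac{N+p}{N+1}=\frac32$ for $j_k$, matching the targets \eqref{zero moment estimate in approximation} and \eqref{first moment estimate in approximation}. Thus it suffices to produce two ingredients uniform in $k$, $\varepsilon$ and $\delta$: a bound on $\|f_k\|_{L^\infty(\O\times\R^3\times\R^+)}$, and a bound on the third velocity moment $\int_a^b\int_\O\int_{\R^3}r^3|\xi|^3 f_k\,d\xi\,dx\,dr$. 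Note the energy bound \eqref{VB-estimate1} only controls the $p=2$ moment, so the boost to $p=3$ genuinely requires Lemma~\ref{Lemma of moment}.

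The uniform $L^\infty$ control of $f_k$ follows from the characteristic representation \eqref{kinetic solution}. Since $r\in[a,b]$ with $a>0$, the weight $e^{-\int_0^t(\nu-\frac{3}{r^2})\,ds}$ is bounded by $e^{3T/a^2}$, and an induction in $k$, using the normalization of $B$ from hypotheses~A, bounds $\|f_k\|_{L^\infty}$ by a constant depending only on $\|f_0\|_{L^\infty}$, $\nu$, $a$, $b$ and $T$. As the kinetic equation feels the fluid regularization only through $\u_{k-1}$, which does not enter this $L^\infty$ estimate, the bound is automatically uniform in $\varepsilon$ and $\delta$ as well.

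For the third moment I would apply Lemma~\ref{Lemma of moment} with $p=3$, $N=3$ and $r=2$; its hypothesis $r^3|\xi|^3 f_0\in L^1$ is part of the data class \eqref{2.4}, and its conclusion bounds the moment by a constant times a power of $\|\u_{k-1}\|_{L^2(0,T;L^6(\O))}$. The crux, which I expect to be the main obstacle, is to bound $\u_k$ in $L^2(0,T;L^6(\O))$ uniformly in the parameters. The estimate \eqref{NS-estimate3} controls $\nabla\u_k$ in $L^2(0,T;L^2(\O))$ uniformly, so by the Sobolev embedding $H^1(\O)\hookrightarrow L^6(\O)$ it only remains to control the spatial mean $\langle\u_k\rangle=|\O|^{-1}\int_\O\u_k\,dx$. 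Writing $\u_k=\langle\u_k\rangle+\tilde\u_k$ with $\tilde\u_k$ mean-free, Poincar\'e gives $\|\tilde\u_k\|_{L^6}\le C\|\nabla\u_k\|_{L^2}$; then from the identity $\big(\int_\O\rho_k\big)\langle\u_k\rangle=\int_\O\rho_k\u_k-\int_\O\rho_k\tilde\u_k$ one bounds the first term by $\big(\int_\O\rho_k\big)^{1/2}\big(\int_\O\rho_k|\u_k|^2\big)^{1/2}$, finite through \eqref{NS-estimate1} and the mass $\int_\O\rho_k$ conserved by the $\varepsilon$-regularized continuity equation on $\mathbb{T}^3$, and the second by $\big(\int_\O\rho_k\big)^{1/2}\big(\int_\O\rho_k|\tilde\u_k|^2\big)^{1/2}$ with $\int_\O\rho_k|\tilde\u_k|^2\le\|\rho_k\|_{L^\gamma}\|\tilde\u_k\|_{L^{2\gamma'}}^2$. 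The requirement $2\gamma'\le 6$, i.e. $\gamma\ge\frac32$, is exactly where the hypothesis $\gamma>\frac32$ of Theorem~\ref{T1} enters, and \eqref{NS-estimate2} supplies $\|\rho_k\|_{L^\gamma}$. This yields $|\langle\u_k\rangle|\le C(1+\|\nabla\u_k\|_{L^2})$, hence the uniform bound on $\|\u_{k-1}\|_{L^2(0,T;L^6(\O))}$ and thereby on the third moment.

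Finally, inserting the uniform $L^\infty$ bound on $f_k$ and the uniform third-moment bound into Lemma~\ref{Lemma of n-j} produces constants in \eqref{zero moment estimate in approximation} and \eqref{first moment estimate in approximation} that depend only on the data, $a$, $b$, $N$ and $T$. Since those estimates are uniform for $t\in[0,T]$, taking the supremum in time gives the asserted $L^\infty(0,T;L^2(\O))$ and $L^\infty(0,T;L^{3/2}(\O))$ bounds, uniform in $k$, $\varepsilon$ and $\delta$. The one genuinely delicate point is the uniform upgrade of the gradient estimate to the full $L^2(0,T;L^6)$ bound on the velocity, i.e.\ the uniform control of the velocity mean on the periodic domain through the density weight.
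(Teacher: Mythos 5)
Your proof is correct and takes essentially the same route as the paper's: the paper's proof is precisely the two-line application of Lemma~\ref{Lemma of moment} and Lemma~\ref{Lemma of n-j} with $N=p=3$, after asserting from \eqref{NS-estimate3} that $\u_k$ is uniformly bounded in $L^2(0,T;L^6(\O))$. The additional work you supply --- the uniform $L^\infty$ bound on $f_k$ via the characteristic representation, and especially the control of the spatial mean of $\u_k$ on the periodic domain through mass conservation, $\sqrt{\rho_k}\,\u_k\in L^\infty(0,T;L^2(\O))$ and the restriction $\gamma\ge\frac32$ --- fills in exactly the step the paper leaves implicit when it passes from the gradient bound to the full $L^2(0,T;L^6)$ bound, so it strengthens rather than departs from the paper's argument.
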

\begin{proof} By \eqref{NS-estimate3}, we have $$\|\u_k\|_{L^2(0,T;L^6(\O))}\leq C,$$
where $C$ is uniform in $k$, $\varepsilon$ and $\delta$; and hence $\u_k$ is also uniformly bounded in $L^2(0,T;L^6(\O)).$
 Therefore, taking $N=p=3$ in Lemma \ref{Lemma of moment} and Lemma \ref{Lemma of n-j}, then \eqref{zero moment estimate in approximation} and \eqref{first moment estimate in approximation} follow.
\end{proof}

\vskip0.3cm

The next step is to  show that the limit in $k$ for the sequence of solution $(\rho_k,\u_k,f_k)$ exists in the following sense.
  \begin{Proposition}
  \label{convergence of ns part}
  Let the solutions of $(\rho_k,\u_k,f_k)$ constructed in Proposition \ref{Proposition at the first level}, then for any $\gamma>\frac{3}{2}$,
$$\rho_k\to\rho\quad\text{ in } L^1((0,T)\times\O)\quad\text{ and } C([0,T]; L^{\gamma}_{weak}(\O)),$$
$$\u_k\to\u\quad\text{ weakly in } L^2(0,T;W_0^{1,2}(\O)),$$
$$\rho_k\u_k\to\rho\u\quad\text{ in } C([0,T];L^{\frac{2\gamma}{\gamma+1}}_{weak}(\O)),$$
and $$\rho_k^{\gamma}\to\rho^{\gamma}\quad\text{ in } L^{\frac{\gamma+\theta}{\gamma}}((0,T)\times\O)\quad\text{ for some }0< \theta<\frac{\gamma}{3}.$$
  \end{Proposition}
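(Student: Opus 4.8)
The plan is to combine the uniform bounds \eqref{NS-estimate1}--\eqref{VB-estimate1} and Lemma~\ref{Lemma n-j approximation} with the standard weak-compactness/Aubin--Lions machinery, noting that at this first level the artificial viscosity $\eps\D\rho$ and the $\dl$-penalty $\dl\na\rho^\beta$ are still present and do most of the work. First I would pass to weak limits along a subsequence: estimate \eqref{NS-estimate3} gives $\u_k\rightharpoonup\u$ in $L^2(0,T;W^{1,2}_0(\O))$ at once (claim two), while \eqref{NS-estimate2} yields $\rho_k\rightharpoonup\rho$ weak-$*$ in $L^\infty(0,T;L^\gamma(\O))$ and \eqref{NS-estimate4} even in $L^\infty(0,T;L^\beta(\O))$. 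Writing $\rho_k\u_k=\sqrt{\rho_k}\,(\sqrt{\rho_k}\u_k)$ and combining \eqref{NS-estimate1}--\eqref{NS-estimate2} with H\"older's inequality ($\tfrac{1}{2\gamma}+\tfrac12=\tfrac{\gamma+1}{2\gamma}$) shows $\rho_k\u_k$ is bounded in $L^\infty(0,T;L^{\frac{2\gamma}{\gamma+1}}(\O))$, so it too has a weak-$*$ limit.

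Next I would upgrade these weak limits to continuity in time with values in a weak topology. From the regularized mass equation $\del_t\rho_k=\eps\D\rho_k-\Dv(\rho_k\u_k)$ one bounds $\del_t\rho_k$ in $L^2(0,T;W^{-1,q}(\O))$ for a suitable $q$, and from the regularized momentum equation one bounds $\del_t(\rho_k\u_k)$ in a negative Sobolev space, here crucially using Lemma~\ref{Lemma n-j approximation} to control the coupling source $-\mathfrak{n}_k\u_k+j_k$. Together with the spatial bounds, the Arzel\`a--Ascoli / Aubin--Lions--Simon lemma then gives $\rho_k\to\rho$ in $C([0,T];L^\gamma_{weak}(\O))$ and $\rho_k\u_k\to\rho\u$ in $C([0,T];L^{\frac{2\gamma}{\gamma+1}}_{weak}(\O))$, once the limiting product has been correctly identified.

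The identification of $\rho\u$ and the strong $L^1$ convergence both rest on strong compactness of the density, which here comes directly from the viscous dissipation in \eqref{approximation-energy inequality}: the term $\eps\dl\beta\int_0^T\!\!\int_\O\rho_k^{\beta-2}|\na\rho_k|^2$ controls $\na\rho_k^{\beta/2}$ in $L^2((0,T)\times\O)$, so $\rho_k^{\beta/2}$ is bounded in $L^2(0,T;H^1(\O))$; combined with the time-derivative bound this yields, via Aubin--Lions, strong convergence of $\rho_k^{\beta/2}$, hence $\rho_k\to\rho$ almost everywhere and in $L^1((0,T)\times\O)$. With $\rho_k\to\rho$ strongly and $\u_k\rightharpoonup\u$ weakly, the product $\rho_k\u_k$ converges to $\rho\u$ in the sense of distributions, matching the weak-$*$ limit above.

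Finally, for the pressure I would invoke a.e.\ convergence together with uniform higher integrability. The $\dl$-term already gives $\rho_k$ bounded in $L^\infty(0,T;L^\beta(\O))$ with $\beta\gg\gamma$, and the pressure-multiplier (Bogovskii) test applied to the momentum equation, with the new terms $\mathfrak{n}_k\u_k$ and $j_k$ estimated through Lemma~\ref{Lemma n-j approximation}, yields a bound $\int_0^T\!\!\int_\O\rho_k^{\gamma+\theta}\le C$ for some $0<\theta<\tfrac\gamma3$; either route makes $\{\rho_k^\gamma\}$ equi-integrable in $L^{\frac{\gamma+\theta}{\gamma}}((0,T)\times\O)$. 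Vitali's theorem then promotes the a.e.\ convergence to $\rho_k^\gamma\to\rho^\gamma$ in $L^{\frac{\gamma+\theta}{\gamma}}$. I expect the main obstacle to be precisely this uniform higher-integrability estimate and the careful handling of the kinetic coupling terms in the multiplier argument; the genuinely hard density-compactness issue (the effective viscous flux identity and renormalization) does not arise yet, being postponed to the later $\eps\to0$ limit where the artificial viscosity is removed.
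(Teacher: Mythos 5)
Your proposal is correct in outline and, in substance, follows the same route the paper relies on --- but note that the paper does not actually carry out this compactness argument: its ``proof'' consists of the remark that the statement follows from the Lions--Feireisl theory for compressible Navier--Stokes with an external force in $L^p(0,T;L^q(\O))$, $p,q>1$, plus the one genuinely new verification, namely that the coupling force $-\mathfrak{n}_k\u_k+j_k$ is uniformly bounded in $L^2(0,T;L^{3/2}(\O))$ via Lemma~\ref{Lemma n-j approximation} (using $\|j_k\|_{L^\infty(0,T;L^{3/2})}$ and $\|\mathfrak{n}_k\|_{L^\infty(0,T;L^2)}\|\u_k\|_{L^2(0,T;L^6)}$), after which Lemma~\ref{Lemma on density} is cited for the $\gamma+\theta$ integrability. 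You identified exactly this as the key new ingredient (both in the time-derivative bound for $\rho_k\u_k$ and in the Bogovskii multiplier estimate), and you then execute the standard machinery the paper leaves implicit: weak limits from \eqref{NS-estimate1}--\eqref{VB-estimate1}, the H\"older splitting $\rho_k\u_k=\sqrt{\rho_k}(\sqrt{\rho_k}\u_k)$ for the $L^{2\gamma/(\gamma+1)}$ bound, Arzel\`a--Ascoli in the weak topologies, and Vitali for the pressure. Your observation that the hard effective-viscous-flux/renormalization issue is postponed to the $\eps\to0$ stage is the correct reading of this proposition as stated at the Galerkin level, though you should be aware the paper later claims (somewhat optimistically) that all bounds are $\eps,\delta$-independent and passes all three limits ``at the same time''; under that reading the $L^1$ convergence of $\rho_k$ genuinely requires the cited Feireisl machinery rather than your dissipation shortcut.

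One technical caveat in your execution: the dissipation term controls $\na\rho_k^{\beta/2}$ in $L^2((0,T)\times\O)$ (with a constant of order $(\eps\dl)^{-1/2}$, harmless at fixed $\eps,\dl$), but this does \emph{not} directly bound $\na\rho_k$ or $\eps\D\rho_k$ in a negative space, since writing $\na\rho_k=\rho_k^{(2-\beta)/2}\,\rho_k^{(\beta-2)/2}\na\rho_k$ degenerates near vacuum; likewise Aubin--Lions applied to $\rho_k^{\beta/2}$ needs time regularity of $\rho_k^{\beta/2}$ itself, not of $\rho_k$. The standard repair at fixed $\eps$ is to test the parabolic mass equation with $\rho_k$, which (using $\rho_k\in L^\infty(0,T;L^\beta)$ with $\beta\ge4$ and $\Dv\u_k\in L^2L^2$) yields $\rho_k$ bounded in $L^2(0,T;H^1(\O))$ uniformly in $k$, and then Aubin--Lions gives the strong $L^1$ convergence directly. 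With that adjustment your argument is complete for the $k$-limit, and it is in fact more detailed than what the paper provides.
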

\begin{Remark}
The proof of this proposition follows from techniques developed by Lions \cite{L2} and Feireisl \cite{F,FNP,F04} applied to the compressible Navier-Stokes equations with the external forces. They are crucial for the limiting
process of the solution to the whole fluid-kinetic system. In the sake of completeness we write some of these estimates in the actual larger system context.
\end{Remark}

The uniform estimate \eqref{more estimate on density} hold for the solutions of the compressible Navier-Stokes equations, even with the external force, if it is in $L^{p}(0,T;L^q(\O))$ for some $p,q>1.$  For the more detail, we refer the readers to \cite{F,FNP,F04, L2}. Thus, the first step consist in  controlling the uniform estimate of the force term in $k,$ $\delta$ and $\varepsilon$, namely
  \begin{equation}
  \label{external force}
  -\int_a^b\int_{\R^3}r(\u_k-\xi)f_k\,d\xi\,dr=-\mathfrak{n}_k \u_k+j_k,
  \end{equation}
 which has been proved to be bounded in $L^p(0,T;L^q(\O))$ for some $p,q>1$, uniformly in $k$, $\delta$ and $\varepsilon$.
In fact, we have
$$\|j_k-\mathfrak{n}_k\u_k\|_{L^{2}(0,T;L^{\frac{3}{2}}(\O))}\leq C\|j_k\|_{L^{\infty}(0,T;L^{\frac{3}{2}}(\O))}+C\|\mathfrak{n}_k\|_{L^{\infty}(0,T;L^2(\O)}\|\u_k\|_{L^2(0,T;L^6(\O))},$$
and hence $j_k-\mathfrak{n}_k\u_k$ is uniformly bounded in $L^2(0,T;L^{\frac{3}{2}}(\O))$.

Note that $
  -\int_a^b\int_{\R^3}r(\u_k-\xi)f_k\,d\xi\,dr$ is bounded in $L^2(0,T;L^{\frac{3}{2}}(\O))$, we can apply the argument in \cite{F,FNP,F04, L2} to  \eqref{approximation-1}. We obtain the following estimate
in Lemma \ref{Lemma on density}.
\begin{Lemma}
\label{Lemma on density}
For any $\gamma>\frac{3}{2}$, there exists a constant $0<\theta<\frac{\gamma}{3}$, depending on $\gamma$, such that
\begin{equation}
\label{more estimate on density}
\int_0^T\int_{\O}(a\rho_k^{\gamma+\theta}+\delta\rho_k^{\beta+\theta})\,dx\,dt\leq C<\infty,
\end{equation}
where $C>0$ is uniformly on $n$, $\varepsilon$ and $\delta$.
\end{Lemma}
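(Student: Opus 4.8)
The plan is to establish the higher-integrability bound \eqref{more estimate on density} by the now-standard Bogovskii-operator technique for compressible Navier--Stokes equations, adapted to carry the extra drag force $\mathbf{F}_k := j_k - \mathfrak{n}_k\u_k$ on the right-hand side. The key idea is to test the regularized momentum equation \eqref{approximation-NS part} against a test function whose divergence reproduces a power of the density, thereby gaining control over $\int\!\int \rho_k^{\gamma+\theta}$ in terms of already-controlled quantities.

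First I would fix $0<\theta<\frac{\gamma}{3}$ (the upper bound $\frac{\gamma}{3}$ is dictated by the requirement, coming from \eqref{NS-estimate1}--\eqref{NS-estimate3}, that the convective term $\rho_k\u_k\otimes\u_k$ remain integrable against the constructed test function) and introduce the Bogovskii operator $\mathcal{B}$, which inverts the divergence on the periodic box $\O=\mathbb{T}^3$: for $g$ with $\int_\O g\,dx=0$ one has $\Dv\,\mathcal{B}[g]=g$ together with the bounds $\|\mathcal{B}[g]\|_{W^{1,s}(\O)}\le C\|g\|_{L^s(\O)}$ and, for $g=\Dv\mathbf{h}$ with $\mathbf{h}\cdot n=0$, the estimate $\|\mathcal{B}[\Dv\mathbf{h}]\|_{L^s(\O)}\le C\|\mathbf{h}\|_{L^s(\O)}$. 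Then I would take as test function
\begin{equation*}
\varphi = \mathcal{B}\!\left[\rho_k^{\theta} - \frac{1}{|\O|}\int_\O \rho_k^{\theta}\,dx\right],
\end{equation*}
and substitute it into the weak momentum formulation \eqref{integral equation-1}. The leading term $\int_0^T\!\int_\O(\rho_k^{\gamma}+\delta\rho_k^{\beta})\Dv\varphi\,dx\,dt$ produces exactly the desired $\int_0^T\!\int_\O(\rho_k^{\gamma+\theta}+\delta\rho_k^{\beta+\theta})\,dx\,dt$ up to the lower-order mean-value correction, which is harmless.

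Next I would bound every remaining term on the right-hand side uniformly in $k,\varepsilon,\delta$. The time-derivative term, after integration by parts in $t$, is handled by the continuity equation to rewrite $\partial_t\rho_k^\theta$ and is controlled using \eqref{NS-estimate1}--\eqref{NS-estimate2} and \eqref{boundness of density}; the viscous terms $\mu\D\u_k$ and $\lambda\nabla\Dv\u_k$ are controlled by \eqref{NS-estimate3} and the $W^{1,s}$-boundedness of $\mathcal{B}$; the convective term $\rho_k\u_k\otimes\u_k:\nabla\varphi$ is estimated by interpolating $\sqrt{\rho_k}\u_k\in L^\infty_tL^2_x$ against $\nabla\u_k\in L^2_tL^2_x$ and using Sobolev embedding, which is precisely where the constraint $\theta<\frac{\gamma}{3}$ is needed so that $\rho_k^\theta$ lies in a sufficiently high $L^s$ space; the $\varepsilon$-terms $\varepsilon\nabla\u_k\cdot\nabla\rho_k$ are absorbed using the dissipation estimate \eqref{NS-estimate3}; and crucially the drag force contributes $\int_0^T\!\int_\O \mathbf{F}_k\cdot\varphi\,dx\,dt$, which I would bound by $\|\mathbf{F}_k\|_{L^2_tL^{3/2}_x}\|\varphi\|_{L^2_tL^{3}_x}$, finite and uniform by Lemma \ref{Lemma n-j approximation} together with the discussion culminating in the uniform bound of $j_k-\mathfrak{n}_k\u_k$ in $L^2(0,T;L^{3/2}(\O))$.

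The main obstacle I anticipate is the convective term: making the estimate of $\int\rho_k\u_k\otimes\u_k:\nabla\mathcal{B}[\rho_k^\theta]$ uniform forces the sharp interplay between the exponent $\theta$, the adiabatic constant $\gamma>\frac{3}{2}$, and the Sobolev exponent $6$ available from $\nabla\u_k\in L^2$, and it is exactly this balance that pins down the admissible range $0<\theta<\frac{\gamma}{3}$. A secondary technical point is the handling of the $\partial_t$ term, where one must verify that the mean-value correction to $\rho_k^\theta$ is differentiable in time with controllable derivative; this is routine given the parabolic regularity of $\rho_k$ furnished by the $\varepsilon$-viscosity in \eqref{b2}, but must be written carefully to keep the constant independent of $\varepsilon$. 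Once all terms are bounded, absorbing the lower-order contributions into the left-hand side yields \eqref{more estimate on density} with a constant uniform in $k,\varepsilon,\delta$, as claimed.
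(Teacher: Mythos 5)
Your proposal is correct and takes essentially the same route as the paper: the paper's proof consists precisely of the uniform bound $\|j_k-\mathfrak{n}_k\u_k\|_{L^{2}(0,T;L^{3/2}(\Omega))}\leq C$ (via Lemma \ref{Lemma n-j approximation} and $\nabla\u_k\in L^2(0,T;L^2(\Omega))$), after which it invokes the standard higher-integrability argument of Lions and Feireisl--Novotn\'{y}--Petzeltov\'{a} for compressible Navier--Stokes with an external force in $L^p(0,T;L^q(\Omega))$, $p,q>1$, which is exactly the Bogovskii-operator computation you spell out. The only difference is that you reconstruct in detail the cited argument that the paper leaves to the references \cite{F,FNP,F04,L2}.
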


With above convergence of Proposition \ref{convergence of ns part} in hand, we are ready to pass to the limits for the Navier-Stokes part as $k\to\infty.$ We could use the similar arguments to handle the other limits with respects to $\varepsilon$ and $\delta$.
For more details on the weak stability of the compressible Navier-Stokes equations, we refer the readers to \cite{L,FNP,F04}.

\vskip0.3cm
Headlines focus on the stability of weak solutions to the kinetic equation \eqref{kinetic}.
By \eqref{estimate of f}, we have
\begin{equation}
\label{weak convergence of f}
f_k\rightharpoonup f\quad\; L^{\infty}(0,T;L^p(\O\times\R^3\times\R^+))-\text{ weak$^*$}
\end{equation}
for any $1< p\leq \infty.$

Letting $\varphi(x)$ be a smooth compactly supported test function, we have
\begin{equation}
\label{control jn}
\begin{split}
&\int(j_k-\int_a^b\int_{\R^3}r\xi f\,d\xi\,dr)\varphi(x)\,dx
\\&\leq \int\int\int r(f_k-f)(1+|\xi|)\varphi(x)\,d\xi\,dr\,dx
\\&=\int\int\int \left(r^{\frac{2}{3}}(f_k-f)^{\frac{2}{3}}(1+|\xi|)^{\frac{4}{3}}\varphi^{\frac{2}{3}}(x)\right)\left(r^{\frac{1}{3}}(f_k-f)^{\frac{1}{3}}(1+|\xi|)^{\frac{-1}{3}}\varphi^{\frac{1}{3}}(x)\right)\,d\xi\,dr\,dx
\\&\leq 2 \left(\int\int\int r(f_k-f)(1+|\xi|^2)\varphi(x)\,d\xi\,dr\,dx\right)^{\frac{2}{3}}\left(\int\int\int r(f_k-f)\frac{\varphi(x)}{1+|\xi|}\,d\xi\,dr\,dx\right)^{\frac{1}{3}}
\\&=2C\left(\int\int\int r(f_k-f)\frac{\varphi(x)}{1+|\xi|}\,d\xi\,dr\,dx\right)^{\frac{1}{3}},
\end{split}
\end{equation}
where we used \eqref{VB-estimate1} and a fact
\begin{equation*}
\begin{split}&\left(\int\int\int r(f_k-f)(1+|\xi|^2)\varphi(x)\,d\xi\,dr\,dx\right)^{\frac{2}{3}}
\\&\leq
\left(2\int\int \int r(1+|\xi|^2)f_k\,d\xi\,dr\,dx\right)^{\frac{2}{3}}\leq C.
\end{split}
\end{equation*}
 Thus, the last term in \eqref{control jn} converges to zero
as $k$ goes to infinity since $f_k$ converges to $f$ weakly in $L^2(0,T;L^2(\O\times\R^3\times\R^+))$
 and
$$\frac{r\varphi(x)}{1+|\xi|}\in L_{loc}^2(\O\times\R^3\times\R^+)).$$
It follows that
\begin{equation}
\label{convergence of jk}
j_k\rightharpoonup j\quad \text{weakly in }L^{\infty}(0,T;L^p(\O))
\end{equation}
for any $1<p\leq\frac{3}{2},$
%and $$\mathfrak{n}_k\rightharpoonup n\quad\quad L^{\infty}(0,T;L^p(\O\times\R^3))-\text{weak for any } p\in (1,2),$$
where $j=\int\int r\xi f\,d\xi\,dr$. \\
Similarly, we have that \begin{equation}
\label{nk weak convergence}
\mathfrak{n}_k=\int \int r f_k\,d\xi\,dr \rightharpoonup \mathfrak{n}=\int\int rf\,d\xi\,dr \text{ weakly in } L^2(0,T;L^2_{loc}(\O)).
\end{equation}

By \eqref{estimate of f} again, $f_k$ is uniformly bounded in $L^{\infty}(0,T;L^{\infty}(\O\times\R^3\times \R^{+}).$ Relying on this, we can show the following uniform bounds. With \eqref{nk weak convergence}, we have the weak convergence of $Q(f_k).$
\begin{Lemma}
\label{kernel estimate} If \eqref{estimate of f}, then
$Q(f_k)$ is uniformly bounded in $$L^{\infty}(0,T;L^{\infty}(\O\times\R^3\times\R^{+})\cap L^{\infty}(0,T;L^{p}(\O\times\R^3\times\R^{+})$$ for any $p\geq 1,$ and
\begin{equation}
\label{operator convergence}
\int_a^b\int_{\R^3}Q(f_k)\,d\xi\,dr\rightharpoonup \int_a^b\int_{\R^3}Q(f)\,d\xi\,dr\text{ weakly in } L^2(0,T;L^2(\O)).
\end{equation}
\end{Lemma}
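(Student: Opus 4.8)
The plan is to establish the two assertions separately, using throughout that the collision operator $Q$ defined in \eqref{operator} is \emph{linear} in $f$.

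For the uniform bounds, I would start from \eqref{estimate of f}, which gives that $f_k$ is bounded in $L^{\infty}(0,T;L^{\infty})$ and in $L^{\infty}(0,T;L^1)$ uniformly in $k,\eps,\delta$; interpolation then bounds $f_k$ in $L^{\infty}(0,T;L^p(\O\times\R^3\times\R^+))$ for every $1\le p\le\infty$. Writing $Q(f_k)=-\nu f_k+\nu G_k$ with gain term $G_k(x,\xi,r,t)=\int_{r^*>r}B(r^*,r)f_k(x,\xi,r^*,t)\,dr^*$, the loss term $-\nu f_k$ inherits all these bounds at once. For $G_k$ I would use that $B$ is continuous, hence $\|B\|_{L^\infty([a,b]^2)}<\infty$ by the regularity in hypothesis A, and that the radial variable lives in the compact interval $[a,b]$; a Minkowski integral inequality in $r^*$ followed by H\"older in $r$ then gives $\|G_k\|_{L^p}\le (b-a)\|B\|_{L^\infty}\|f_k\|_{L^p}$, uniformly in $k,\eps,\delta$, and the case $p=\infty$ follows from the pointwise estimate $|G_k|\le (b-a)\|B\|_{L^\infty}\|f_k\|_{L^\infty}$. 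This proves the asserted uniform bound of $Q(f_k)$ in $L^{\infty}(0,T;L^{\infty}\cap L^p)$ for all $p\ge 1$.

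For the weak convergence \eqref{operator convergence} I would first integrate \eqref{operator} in $\xi$ and $r$ and apply Fubini to the gain term, obtaining
\[
\int_a^b\!\!\int_{\R^3}Q(f_k)\,d\xi\,dr=-\nu\int_a^b\!\!\int_{\R^3}f_k\,d\xi\,dr+\nu\int_a^b\Big(\int_{r<r^*}\!\!B(r^*,r)\,dr\Big)\Big(\int_{\R^3}f_k(x,\xi,r^*,t)\,d\xi\Big)dr^*,
\]
so that the integrated operator is a finite linear combination of weighted zeroth moments $\int_a^b\int_{\R^3}w(r)f_k\,d\xi\,dr$ with bounded weights $w$ (the inner kernel integral being bounded by hypothesis A). Each such moment is controlled by $\mathfrak n_k$: since $r\ge a>0$ and $f_k\ge0$ one has $\int_a^b\int_{\R^3}w(r)f_k\,d\xi\,dr\le a^{-1}\|w\|_{L^\infty}\,\mathfrak n_k$, whence by \eqref{nk weak convergence} these moments are uniformly bounded in $L^{\infty}(0,T;L^2(\O))$. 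To pass to the limit I would repeat the splitting argument of \eqref{control jn}: cutting the $\xi$-integral at $|\xi|=R$, the large-velocity tail is controlled by the uniform second-moment bound \eqref{VB-estimate1}, while on $\{|\xi|\le R\}$ the weight $w(r)\mathbf{1}_{\{|\xi|\le R\}}$ is square integrable, so the weak $L^2$ convergence $f_k\rightharpoonup f$ transfers the bulk. Each weighted zeroth moment of $f_k$ thus converges weakly in $L^2(0,T;L^2(\O))$ to the corresponding moment of $f$, and by linearity together with the same Fubini identity applied to $f$ this yields \eqref{operator convergence}.

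The step I expect to be the main obstacle is exactly this commutation of the weak limit with the $\xi$-integration: weak-$*$ convergence in $L^{\infty}(0,T;L^p)$ does not by itself pass to $\xi$-moments, because $\R^3$ is unbounded and the effective test weight (the constant $1$ in $\xi$) is inadmissible. As for $\mathfrak n_k$ and $j_k$ earlier, this is overcome by the tightness in $\xi$ furnished by the uniform bound \eqref{VB-estimate1} on $\int r^3(1+|\xi|^2)f_k$; the collision kernel introduces no further difficulty, since it is bounded and acts only on the compact radial variable.
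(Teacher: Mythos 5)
Your proposal is correct and follows essentially the same route as the paper: the loss/gain splitting with the kernel integral $\int_{r>r^*}B(r^*,r)\,dr^*\leq C$ yields the uniform $L^{\infty}$ and $L^{1}$ (hence, by interpolation, all $L^{p}$) bounds, and your reduction of the integrated gain term to weighted zeroth moments controlled by $a^{-1}\mathfrak{n}_k$, followed by the velocity cut-off/tightness argument using \eqref{VB-estimate1}, is exactly the mechanism behind the paper's direct appeal to \eqref{nk weak convergence}. The only cosmetic differences are that the paper bounds the kernel integral directly from hypothesis A rather than via $\|B\|_{L^\infty}(b-a)$, and cites the already-established convergence \eqref{nk weak convergence} instead of re-running the splitting of \eqref{control jn}.
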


\begin{proof}
\begin{equation*}
\begin{split}
\|Q(f_k)\|_{L^{\infty}}&\leq \nu \|f_k(x,\xi,r,t)\|_{L^{\infty}}+\nu \|f_k(x,\xi,r,t)\|_{L^{\infty}}\int_{r>r^*}B(r^*,r)\,d r^*
\\&\leq (\nu+C\nu) \|f_k(x,\xi,r,t)\|_{L^{\infty}},
\end{split}
\end{equation*}
where we used a fact $$ \int_{r>r^*}B(r^*,r)\,d r^*\leq C.$$
Similarly, \begin{equation*}
\begin{split}
\|Q(f_k)\|_{L^{1}}&\leq \nu \|f_k(x,\xi,r,t)\|_{L^{1}}+\nu\| \int_{r>r^*}B(r^*,r)f_k(t,x,\xi,r^*)\,d r^*\|_{L^1}
\\&\leq \nu \|f_k(x,\xi,r,t)\|_{L^{1}}+ \nu \|f_k(x,\xi,r,t)\|_{L^{1}}\int_{r>r^*}B(r,r^*)\,d r^*
\\&\leq (\nu +c\nu)\|f_k(x,\xi,r,t)\|_{L^{1}}.
\end{split}
\end{equation*}
For any smooth $\varphi(x)$,
\begin{equation*}
\begin{split}&
\int_{\O}\left(\int_a^b\int_{\R^3}Q(f_k)(x,\xi,r,t)\,d\xi\,dr-\int_a^b\int_{\R^3}Q(f_k)(x,\xi,r,t)\,d\xi\,dr\right)\varphi(x)\,dx
\\&\leq\frac{\nu}{a}
\int_{\O}\int_a^b\int_{\R^3}r(f_k-f)\,d\xi\,dr\varphi(x)\,dx+\frac{\nu}{a}
\int_{\O}\int_a^b\int_{\R^3}\int_{r>r^*}rB(r^*,r) (f_k-f)\,d r^*\,d\xi\,dr\,dx
\\&\leq \frac{C \nu}{a}\int_{\O}\int_a^b\int_{\R^3}r(f_k-f)\,d\xi\,dr\varphi(x)\,dx \to 0
\end{split}
\end{equation*}
as $k\to\infty.$
By \eqref{nk weak convergence}, we have \eqref{operator convergence}.
\end{proof}

The last task is to handle the convergence of the right-hand side of \eqref{integral equation-1}$$\int_a^b\int_{\R^3}r \u_k f_k\,d\xi\,d r.$$
To prove this one, we follow the same argument as in \cite{MV}. In fact, we shall use the following lemma, which was from \cite{L2}.
\begin{Lemma}
Let $g^n$ and $h^n$ converge weakly to $g$ and $h$ respectively in $L^{p_1}(0,T;L^{p_2}(\O))$ and $L^{q_1}(0,T;L^{q_2}(\O))$ where $1\leq p_1, q_1\leq +\infty$,
$$\frac{1}{p_1}+\frac{1}{q_1}=\frac{1}{p_2}+\frac{1}{q_2}=1.$$
We assume in addition that
$$\frac{\partial g^n}{\partial t} \text{ is bounded in } L^1(0,T;W^{-m,1}(\O))\text{ for some }m\geq 0 \text{independent of } n $$
and $$ \|h^n-h^n(\cdot+\xi,t)\|_{L^{q_1}(0,T;L^{q_2}(\O))}\to 0 \text{as } |\xi|\to 0, \text{ uniformly in } n.$$
Then, $g^n h^n$ converges to $gh$ in the sense of distributions on $\O\times(0,T).$
\end{Lemma}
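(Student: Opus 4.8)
The plan is to prove convergence tested against an arbitrary $\phi\in C_c^\infty(\O\times(0,T))$, i.e.\ to show $\int_0^T\int_\O g^nh^n\phi\,dx\,dt\to\int_0^T\int_\O gh\phi\,dx\,dt$. The obstruction is the familiar one that the product of two weakly convergent sequences need not converge to the product of the limits, so the two structural hypotheses -- the temporal regularity of $g^n$ and the uniform spatial equicontinuity of $h^n$ -- must each supply the missing compactness in one variable. My strategy is a double regularization: mollify $h^n$ in space to exploit its translation continuity, then use the time-derivative bound on $g^n$ to recover strong (in particular temporal) compactness of the resulting smoothed object.

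Let $\theta_\delta$ be a standard spatial mollifier on $\O=\mathbb{T}^3$ and set $h^n_\delta=h^n*_x\theta_\delta$. I would split
\[
\int_0^T\!\!\int_\O g^n h^n\phi\,dx\,dt=\underbrace{\int_0^T\!\!\int_\O g^n(h^n-h^n_\delta)\phi\,dx\,dt}_{A^n_\delta}+\underbrace{\int_0^T\!\!\int_\O g^n h^n_\delta\phi\,dx\,dt}_{B^n_\delta}.
\]
The first term is handled by H\"older and the conjugacy $\frac1{p_1}+\frac1{q_1}=\frac1{p_2}+\frac1{q_2}=1$: since $h^n-h^n_\delta=\int(h^n-h^n(\cdot-z))\theta_\delta(z)\,dz$, the uniform translation hypothesis gives $\|h^n-h^n_\delta\|_{L^{q_1}(L^{q_2})}\le\sup_{|z|\le\delta}\|h^n-h^n(\cdot-z)\|_{L^{q_1}(L^{q_2})}\to0$ as $\delta\to0$ \emph{uniformly in} $n$; as $g^n$ is bounded in $L^{p_1}(L^{p_2})$, this yields $|A^n_\delta|\le C\|\phi\|_\infty\,\omega(\delta)$ with $\omega(\delta)\to0$ uniformly in $n$.

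For $B^n_\delta$ with $\delta$ fixed I would transfer the mollifier onto $g^n\phi$: writing $G^n_\delta(y,t):=\int_\O g^n(x,t)\theta_\delta(x-y)\phi(x,t)\,dx$, Fubini gives $B^n_\delta=\int_0^T\int_\O h^n G^n_\delta\,dy\,dt$. This is where the temporal regularity enters. For fixed $\delta$ the family $G^n_\delta$ is bounded in $L^{p_1}(0,T;C^1(\O))$ by spatial smoothing, while $\partial_t G^n_\delta=\langle\partial_t g^n,\theta_\delta(\cdot-y)\phi\rangle+\int g^n\theta_\delta\,\partial_t\phi\,dx$ is bounded in $L^1(0,T;C(\O))$, because $\|\theta_\delta(\cdot-y)\phi\|_{W^{m,\infty}}\le C_\delta$ pairs against the bound on $\partial_t g^n$ in $L^1(W^{-m,1})$. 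By Aubin--Lions--Simon (using $C^1\hookrightarrow\hookrightarrow C^0$) the sequence $G^n_\delta$ is relatively compact, hence $G^n_\delta\to G_\delta$ strongly in $L^{p_1}(0,T;L^{p_2})=L^{q_1'}(0,T;L^{q_2'})$, the limit $G_\delta(y,t)=\int g(x,t)\theta_\delta(x-y)\phi(x,t)\,dx$ being identified through the weak convergence $g^n\rightharpoonup g$. Pairing the strongly convergent $G^n_\delta$ against the weakly convergent $h^n$ then gives $B^n_\delta\to\int_0^T\int_\O hG_\delta\,dy\,dt=\int_0^T\int_\O g\,h_\delta\,\phi\,dx\,dt=:B_\delta$, where $h_\delta=h*_x\theta_\delta$.

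Finally $B_\delta\to\int_0^T\int_\O gh\phi\,dx\,dt$ as $\delta\to0$, since $h$ is a fixed element of $L^{q_1}(L^{q_2})$ and hence has continuous translation, so $h_\delta\to h$ strongly there. An $\varepsilon/3$ argument closes the proof: choose $\delta$ small to make $|A^n_\delta|$ and $|B_\delta-\int gh\phi|$ small uniformly in $n$, then $n$ large to make $|B^n_\delta-B_\delta|$ small. I expect the main technical obstacle to be the Aubin--Lions step for $G^n_\delta$, where the bound on $\partial_t g^n$ is consumed and where the endpoint exponents $p_1=\infty$ or $q_1=\infty$ force one to phrase the compactness in $C([0,T];C(\O))$ rather than in an $L^{p_1}$-space; the uniform-in-$n$ smallness of $A^n_\delta$, resting solely on the spatial equicontinuity of $h^n$, is the other point where no compactness can be circumvented.
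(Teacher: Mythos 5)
The paper contains no proof of this lemma at all: it is quoted verbatim from Lions \cite{L2} and used as a black box, so the only meaningful comparison is with Lions' own argument, which your proposal reconstructs almost exactly --- the splitting $g^nh^n=g^n(h^n-h^n_\delta)+g^nh^n_\delta$, the uniform-in-$n$ smallness of the first term via Minkowski's integral inequality and the translation hypothesis, and the transfer of the mollifier onto $g^n\phi$ followed by a time-compactness argument fed by the $L^1(0,T;W^{-m,1})$ bound on $\partial_t g^n$ is precisely the classical route. For $1\le p_1<\infty$ your argument is complete: $G^n_\delta$ bounded in $L^{p_1}(0,T;C^1(\O))$ with $\partial_t G^n_\delta$ bounded in $L^1(0,T;C(\O))$ gives, by Aubin--Lions--Simon, relative compactness in $L^{p_1}(0,T;C(\O))\hookrightarrow L^{p_1}(0,T;L^{p_2}(\O))$, the limit is identified through $g^n\rightharpoonup g$, and the $\varepsilon/3$ scheme closes correctly.

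The one genuine soft spot is the endpoint $p_1=\infty$, $q_1=1$, which you flag but propose to resolve incorrectly: with $\partial_t G^n_\delta$ bounded only in $L^1(0,T;C(\O))$, Simon's theorem does \emph{not} give relative compactness in $C([0,T];C(\O))$ --- an $L^1$ bound on the time derivative yields no equicontinuity in $t$. Take $G^n(t)=v\,\chi(n(t-\tfrac12))$ for a fixed smooth step $\chi$ and fixed $v\in C^1$: the time derivatives are bounded in $L^1(0,T;C)$ uniformly, yet the limit is discontinuous in $t$, so no subsequence converges uniformly. What Simon does give is compactness in $L^r(0,T;C(\O))$ for every $r<\infty$, which falls short of the strong convergence in $L^{p_1}(0,T;L^{p_2})$ with $p_1=\infty$ that your pairing against $h^n$ requires. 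The repair uses a hypothesis you have not exploited: when $q_1=1$, weak convergence of $h^n$ in $L^1(0,T;L^{q_2})$ forces, by Dunford--Pettis together with Eberlein--\v{S}mulian, uniform integrability of $t\mapsto\|h^n(t)\|_{L^{q_2}}$; combining this with convergence of $\|G^n_\delta-G_\delta\|_{C(\O)}$ to zero in measure in $t$ (from the $L^r$ compactness) and the uniform $L^\infty(0,T;C)$ bound closes the estimate $\int_0^T\|h^n(t)\|_{L^{q_2}}\|G^n_\delta(t)-G_\delta(t)\|_{L^{q_2'}}\,dt\to 0$. A second, smaller gap: your last step asserts $h_\delta\to h$ strongly in $L^{q_1}(L^{q_2})$ because ``$h$ is a fixed element,'' but a fixed $L^\infty$-type function need not have continuous translations (indicators), so this fails as stated when $q_2=\infty$ or $q_1=\infty$; it is restored by noting that $h^n-h^n(\cdot+\xi)\rightharpoonup h-h(\cdot+\xi)$ and weak lower semicontinuity of the norm transfer the \emph{uniform} translation modulus of the $h^n$ to $h$ itself.
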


 Indeed, we have
$$(\mathfrak{n}_k)_t=-\Dv_x(j_k),$$
and so $(\mathfrak{n}_k)_t$ is bounded in $L^{\infty}(0,T;W^{-1,1}(\O)).$ Since $\nabla\u_k$ is bounded in $L^2(0,T;L^2(\O)),$ we can apply a classical compactness lemma \cite{L2} to have
\begin{equation}
\label{dis weak convergence}
\mathfrak{n}_k\u_k\to \mathfrak{n}\u\;\;\text{in the sense of distributions}.
\end{equation}

 Similarly, we are able to show, as $k\to\infty$,
 \begin{equation}
 \label{convergence of external force}
 \int_{\O}\int_a^b\int_{\R^3}\frac{\u_k-\xi}{r^2}f_k\phi\,d\xi\,d r\, d x\to \int_{\O}\int_a^b\int_{\R^3}\frac{\u-\xi}{r^2}f\phi\,d\xi\,d r\, d x
 \end{equation}
 for any $\phi \in C^1([0,T]\times\O)$ with compact support with respect to $x$.

With Proposition \ref{convergence of ns part}, \eqref{convergence of jk}, \eqref{operator convergence}, \eqref{dis weak convergence} and  \eqref{convergence of external force},  we are ready to pass to the limits in the weak formulation of the Navier-Stokes and in the weak formulation of kinetic equation.
Thus, we are
allowed to pass to the limits as $k$ goes to infinity in the approximation of \eqref{integral equation-1} for the following weak formulations
\begin{equation*}
\begin{split}
\label{integral equation}
&\int_{\O}\rho_k\u_k(t)\cdot\varphi\,dx-\int_{\O}\m_0\cdot\varphi\,dx
=\int_0^t\int_{\O}\left(\mu\D\u_k+\lambda\nabla\Dv\u_k\right)\varphi\,dx\,dt
\\&+\int_0^t\int_{\O}\left(\varepsilon\nabla\u_k\cdot\nabla\rho_k-\Dv(\rho_k\u_k\otimes \u_k)-\nabla \rho_k^{\gamma}-\delta\nabla\rho_k^{\beta}-\mathfrak{n}_k\u_k+j_k\right)\varphi\,dx\,dt,
\end{split}
\end{equation*}
and
\begin{equation*}
\label{2.2+}
\begin{split}
&\quad-\int_{0}^{t}\!\!\!\int_a^b\int_{\O}\int_{\R^3}f_k\left({\partial_t\phi+\xi\cdot\nabla_{x}\phi+\frac{(\u_k-\xi)}{r^2} \cdot\nabla_{\xi}\phi}\right)\;dxd\xi\,d r ds
\\&\quad\quad\quad\quad\quad=\int_a^b\int_{\O}\int_{\R^3}f_{0}\phi(0,\cdot,\cdot)\;dxd\xi\,dr+\int_0^t\int_a^b\int_{\O}\int_{\R^3}Q(f_k) \phi\,d\xi\,dx\,dr\,dt.
\end{split}
\end{equation*} Here we should remark that the all uniform bounds in this section are independent on $\varepsilon$ and $\delta$. Thus, we can pass into the limits as $k\to\infty$, $\varepsilon\to 0$ and $\delta\to 0$ at the same time.
 Thus, all convergence results in this section allow us to recover the weak formulations \eqref{weak-1}-\eqref{weak-2} by passing into the limits as $k\to \infty$, $\varepsilon\to0$
 and $\delta\to 0.$

At last,
 passing to the limits in \eqref{approximation-energy inequality} with respects to $k\to \infty$, $\varepsilon\to0$
 and $\delta\to 0$, the following energy inequality could be obtained in the following Lemma:
\begin{Lemma}
\label{Lemma of energy inequality} If $(\rho,\u)$ is the weak limit of $(\rho_k,\u_k)$ as $k$ goes to infinity, then
 \begin{equation}
\label{energy final}
\begin{split}
&\int_{\O}(\frac{1}{2}\rho|\u|^2+\frac{\rho^{\gamma}}{\gamma-1})\,dx+\int_{\O}\int_a^b\int_{\R^3}r^3(1+|\xi|^2)f\,d\xi\,dr\,dx
\\&+\mu\int_0^T\int_{\O}|\nabla\u|^2\,dx\,dt+\lambda\int_0^T\int_{\O}|\Dv\u|^2\,dx\,dt
\\&\leq \int_{\O}(\frac{\m_0^2}{2\rho_0}+\frac{\rho_0^{\gamma}}{\gamma-1})\,dx+\int_{\O}\int_a^b\int_{\R^3}r^3(1+|\xi|^2)f_0\,d\xi\,dr\,dx.
\end{split}
\end{equation}

In addition, the same  conclusion holds true as the limits  $\varepsilon\to 0$ and $\delta\to 0$.
\end{Lemma}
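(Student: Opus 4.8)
The plan is to start from the approximate energy inequality \eqref{approximation-energy inequality}, which holds uniformly in $k$, $\varepsilon$ and $\delta$, and to pass to the limit first as $k\to\infty$ (for fixed $\varepsilon,\delta$) and afterwards as $\varepsilon\to 0$ and $\delta\to 0$. The right-hand side $E_0$ does not depend on $k$, and the dissipative $\varepsilon$-term on the left is non-negative, so discarding it only weakens the inequality. Hence the whole matter reduces to showing that each remaining term on the left is \emph{weakly lower semicontinuous} under the convergences gathered in Proposition \ref{convergence of ns part} and in \eqref{weak convergence of f}. Because $\liminf$ is superadditive and the full approximate left-hand side is bounded by $E_0$, it is enough to treat each term separately and then sum the termwise lower-semicontinuity bounds.

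For the two dissipation terms, the weak convergence $\u_k\rightharpoonup\u$ in $L^2(0,T;W^{1,2}_0(\O))$ together with the convexity of $A\mapsto|A|^2$ yields
\[
\mu\int_0^T\!\!\int_\O|\nabla\u|^2\,dx\,dt+\lambda\int_0^T\!\!\int_\O|\Dv\u|^2\,dx\,dt\le\liminf_{k\to\infty}\Big(\mu\int_0^T\!\!\int_\O|\nabla\u_k|^2+\lambda\int_0^T\!\!\int_\O|\Dv\u_k|^2\Big).
\]
For the pressure potential, the convexity of $z\mapsto z^{\gamma}/(\gamma-1)$ together with $\rho_k\to\rho$ in $C([0,T];L^{\gamma}_{weak}(\O))$ gives $\int_\O\rho^{\gamma}/(\gamma-1)\,dx\le\liminf_k\int_\O\rho_k^{\gamma}/(\gamma-1)\,dx$. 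For the particle term, the nonnegativity $r^3(1+|\xi|^2)\ge 0$ and the weak-$*$ convergence $f_k\rightharpoonup f$ give, after truncating the weight and applying a Fatou-type argument, the lower semicontinuity of $\int_\O\int_a^b\int_{\R^3}r^3(1+|\xi|^2)f\,d\xi\,dr\,dx$.

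The delicate term is the fluid kinetic energy. The plan is to write $\tfrac12\rho_k|\u_k|^2=\tfrac12|\rho_k\u_k|^2/\rho_k$ and to invoke the joint convexity of $(\rho,\m)\mapsto|\m|^2/\rho$ on $\{\rho>0\}$ (extended lower-semicontinuously across $\rho=0$). Combined with the convergences $\rho_k\to\rho$ in $C([0,T];L^{\gamma}_{weak}(\O))$ and $\rho_k\u_k\to\rho\u$ in $C([0,T];L^{2\gamma/(\gamma+1)}_{weak}(\O))$ from Proposition \ref{convergence of ns part}, weak lower semicontinuity of convex functionals delivers
\[
\int_\O\tfrac12\rho|\u|^2\,dx\le\liminf_{k\to\infty}\int_\O\tfrac12\rho_k|\u_k|^2\,dx.
\]
This is the main obstacle, since only weak convergence of the velocity is available and no strong compactness of $\u_k$ is at hand; the joint convexity of $(\rho,\m)\mapsto|\m|^2/\rho$ is precisely what compensates for this. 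Assembling the four lower-semicontinuity estimates, dropping the non-negative $\varepsilon$-dissipation term, and using that $E_0$ is $k$-independent produces \eqref{energy final} carrying the extra term $\tfrac{\delta}{\beta-1}\int_\O\rho^{\beta}\,dx$ on the left and $\tfrac{\delta}{\beta-1}\int_\O\rho_0^{\beta}\,dx$ on the right.

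Finally, for the limits $\varepsilon\to 0$ and $\delta\to 0$ I would argue identically: all bounds are $\varepsilon,\delta$-uniform, the same convexity and nonnegativity arguments apply verbatim, the $\varepsilon$-dissipation term and the non-negative term $\tfrac{\delta}{\beta-1}\int_\O\rho^{\beta}\,dx$ on the left are discarded, and on the right one has $\tfrac{\delta}{\beta-1}\int_\O\rho_0^{\beta}\,dx\to 0$ as $\delta\to 0$ because $\rho_0$ is bounded and hence lies in $L^{\beta}(\O)$. This yields \eqref{energy final} for the limiting triple $(\rho,\u,f)$, completing the proof.
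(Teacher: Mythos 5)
Your proposal is correct and follows essentially the same route as the paper, whose proof is exactly this argument in compressed form: pass to the limit in \eqref{approximation-energy inequality} using weak convergence together with convexity of the energy (in particular the joint convexity of $(\rho,\m)\mapsto|\m|^2/\rho$ for the kinetic energy term), discard the nonnegative $\varepsilon$-dissipation, and remove the $\delta$-pressure contribution using the uniformity of all bounds in $\varepsilon$ and $\delta$. One cosmetic remark: since the paper only assumes $\lambda+\mu/3\ge 0$ (so $\lambda$ may be negative), the two viscous terms should be justified as the single convex quadratic form $\mu|\nabla\u|^2+\lambda|\Dv\u|^2$ rather than by convexity of each term separately, which is in fact how you grouped them in your display.
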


\begin{proof}
Using the weak convergence  and  the convexity of the
energy,
estimates  \eqref{energy final} follow by passing to the limit from \eqref{approximation-energy inequality} with respect to $k\to \infty$.

Finally, because all estimates for  are also unifomm for both $\varepsilon$ and $\delta$ small, then  the corresponding limiting problem, as both parameters tend to zero, yield a solution to the problem posed in Theorem \ref{T1}.
\end{proof}
Thus, we have completed the proof of our main result Theorem \ref{T1}.
\vskip0.3cm
\bigskip

\bigskip\bigskip

%%%%%%%%%%%%%%%%%%%%%

\section*{Acknowledgments}
 I. M. Gamba acknowledges support from NSF grant DMS-1413064 and  DMS-1715515. C. Yu  acknowledges support from NSF grant DMS-1540162. This work was also partially funded by NSF  RNMS-1107465.
 The authors thank the support from the Institute of Computational Engineering and Sciences (ICES) at the University of Texas Austin.

\bigskip\bigskip

\end{document}